\newtheorem{thm}{Theorem}[section]
\newtheorem{cor}[thm]{Corollary}
\newtheorem{lem}[thm]{Lemma}
\newtheorem{prop}[thm]{Proposition}
\newtheorem{exam}[thm]{Example}
\numberwithin{equation}{section}
\begin{document}

\title{Generalized Hirano inverses in rings}

\author{Marjan Sheibani Abdolyousefi}
\author{Huanyin Chen}
\address{
Women's University of Semnan (Farzanegan), Semnan, Iran}
\email{<sheibani@fgusem.ac.ir>}
\address{
Department of Mathematics\\ Hangzhou Normal University\\ Hang -zhou, China}
\email{<huanyinchen@aliyun.com>}

\subjclass[2010]{15A09, 32A65, 16E50.} \keywords{generalized Drazin inverse, matrix ring; Cline's formula; additive property; Banach algebra.}

\begin{abstract}
We introduce and study a new class of generalized inverses in rings.
An element $a$ in a ring $R$ has generalized Hirano inverse if there exists $b\in R$ such that $bab=b, b\in comm^2(a), a^2-ab\in R^{qnil}.$ We prove that the generalized Hirano inverse of an element is its generalized Drazin inverse. An element $a\in R$ has generalized Hirano inverse if and only if there exists $p=p^2\in comm^2(a)$ such that $a^2-p\in R^{qnil}$. We then completely determine when a $2\times 2$ matrix over projective-free rings has generalized Hirano inverse. Cline's formula and additive properties for generalized Hirano inverses are thereby obtained.
\end{abstract}

\maketitle

\section{Introduction}

Let $R$ be an associative ring with an identity. The commutant of $a\in R$ is defined by $comm(a)=\{x\in
R~|~xa=ax\}$. Set $R^{qnil}=\{a\in R~|~1+ax\in U(R)~\mbox{for
every}~x\in comm(a)\}$. We say $a\in R$ is quasinilpotent if
$a\in R^{qnil}$. For a Banach algebra $A$ it is well known
 that (see~\cite[page 251]{H}) $$a\in A^{qnil}\Leftrightarrow
\lim\limits_{n\to\infty}\parallel a^n\parallel^{\frac{1}{n}}=0.$$ The double commutant of $a\in R$ is defined
by $comm^2(a)=\{x\in R~|~xy=yx~\mbox{for all}~y\in comm(a)\}$.  An element $a$ in $R$ is said to have generalized Drazin inverse if there exists $b\in R$ such that $$b=bab, b\in comm^2(a), a-a^2b\in R^{qnil}.$$  The preceding $b$ is unique, if such element exists. As usual,
it will be denoted by $a^{d}$, and called the generalized Drazin inverse of $a$. Generalized Drazin inverse is extensively studied in both matrix theory and Banach algebra (see~\cite{C, LCC, KP, LCC2} and ~\cite{LZ}).

The goal of this paper is to introduce and study a new class of generalized inverses in a ring.
An element $a\in R$ has generalized Hirano inverse if there exists $b\in R$ such that $$b=bab, b\in comm^2(a), a^2-ab\in R^{qnil}.$$  We shall prove that the preceding $b$ is unique, if such element exists. It will be denoted by $a^{h}$, and called the generalized Hirano inverse of $a$.

In Section 2 we prove that the generalized Hirano inverse of an element is its generalized Drazin inverse. An element $a\in R$ has generalized Hirano inverse if and only if there exists $p=p^2\in comm^2(a)$ such that $a^2-p\in R^{qnil}$.

Recall that a commutative ring $R$ is projective-free
provided that every finitely generated projective $R$-module is
free. The class of projective-free rings is very large. It includes local rings, principle ideal domains, polynomial rings over every principal ideal domain, etc. In Section 3, we completely determine when a $2\times 2$ matrix over projective-free rings has Hirano inverse. Let $R$ be
a projective-free ring. We prove that $A\in M_2(R)$ has generalized Hirano inverse if and only if $det(A), tr(A^2)\in J(R)$; or $det^2(A)\in 1+J(R), tr(A^2)\in 2+J(R)$; or $det(A)\in J(R), tr(A^2)\in 1+J(R)$ and $x^2-tr(A^2)x+det^2(A)=0$ is solvable.

Let $a,b\in R$. Then $ab$ has generalized Drazin inverse if and only if $ba$ has generalized Drazin inverse, and
$(ba)^d = b((ab)^d)^2a$. This was known as Cline's formula for generalized Drazin inverses.
It plays an important role in
revealing the relationship between the generalized Drazin inverse of a sum of two elements and the
generalized Drazin inverse of a block triangular matrix of the form. In Section 4, we establish Cline's formula for generalized Hirano inverses. Let $R$ be a ring, and let $a,b,c\in R$. If $aba=aca$, we prove that $ac$ has generalized Hirano inverse if and only if $ba$ has generalized Hirano inverse, and that $(ba)^{h}=b((ac)^{h})^2a$.

Finally, we consider additive properties of such generalized inverses in a Banach algebra. Let $A$ be a Banach algebra and $a,b\in A$. If $a,b\in A$ have generalized Hirano inverses and satisfy
$$a=ab^{\pi}, b^{\pi}ba^{\pi}=b^{\pi}b, b^{\pi}a^{\pi}ba=b^{\pi}a^{\pi}ab,$$ we prove that $a+b$ has generalized Hirano inverse. Here $a^{\pi}=1-aa^h$ and $b^{\pi}=1-bb^{h}$.

Throughout the paper, all rings are associative with an identity and all Banach algebras are complex. We use $N(R)$ to denote the set of all nilpotent elements in $R$ and $J(R)$ to denote the Jacobson radical of $R$. $\chi (A)=det(tI_n-A)$ is the characteristic polynomial of $n\times n$ matrix $A$. ${\Bbb N}$ stands for the set of all natural numbers.

\section{Generalized Drazin inverses}

The goal of this section is to explore the relations between generalized Drazin inverses and generalized Hirano inverses.
We begin with

\begin{lem} Let $R$ be a ring and $a\in R^{qnil}$. If $e^2=e\in comm(a)$, then $ae\in R^{qnil}$.\end{lem}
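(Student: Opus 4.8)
The plan is to show directly that $ae \in R^{qnil}$ by verifying the defining condition: for every $x \in comm(ae)$, the element $1 + (ae)x$ is a unit. The key structural fact to exploit is that $e$ is a central idempotent relative to $a$, so $ae = ea = eae$ and $e$ commutes with any power of $a$; moreover $ae$ ``lives inside'' the corner ring $eRe$, where $e$ acts as the identity.

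First I would observe that since $e \in comm(a)$, for any $x \in comm(ae)$ the element $exe$ lies in $comm(ae)$ as well, and in fact one can reduce to working in the corner ring $eRe$: we have $1 + (ae)x = 1 + a(exe) + (1-e)(\text{something})$, so it suffices to show $e + (ea e)(exe)$ is invertible in $eRe$ while the complementary part contributes only a unit $1-e$ on the other corner. The cleaner route, which I would actually pursue, is to show that $ae \in R^{qnil}$ is equivalent to $ae \in (eRe)^{qnil}$, and then use that $a \in R^{qnil}$ forces $eae = ae \in (eRe)^{qnil}$ because quasinilpotence passes to corner rings: if $y \in eRe$ commutes with $ae$, then since $e$ commutes with $a$, the element $y$ (viewed in $R$) commutes with $a$ after multiplying appropriately, and $1 + ay$ being a unit in $R$ yields $e + (ae)y$ a unit in $eRe$.

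Concretely, the main steps in order: (1) note $ae = ea = eae$ using $e \in comm(a)$; (2) take arbitrary $x \in comm(ae)$ and set $x' = exe \in eRe$, checking $x'$ commutes with $ae$ in $eRe$; (3) show $x'$, as an element of $R$, satisfies $ax' = x'a$ — here is where I use $ea = ae$ and $e^2 = e$, so that $x' = exe$ commutes with $a$; (4) since $a \in R^{qnil}$, conclude $1 + ax' \in U(R)$; (5) transfer this to invertibility of $e + (ae)x'$ in $eRe$, hence $ae \in (eRe)^{qnil}$; (6) finally argue $1 + (ae)x = (1-e) + (e + (ae)x)$ and reduce invertibility of this to invertibility in the corner, or more simply observe $(ae)x = (ae)(exe) + (ae)(x - exe)$ and that the second term contributes nilpotently — actually the cleanest finish is: $(ae)x$ and $(ae)(exe)$ generate the same behavior because $ae \cdot x = ae \cdot exe \cdot$ (absorbing $e$), so $1 + (ae)x = 1 + a(exe)$ directly, and we already know this is a unit.

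The main obstacle I anticipate is step (6) — carefully justifying that controlling $1 + (ae)(exe)$ in the corner suffices to control $1 + (ae)x$ in all of $R$, i.e. that an arbitrary $x \in comm(ae)$ can be replaced by its compression $exe$ without loss. The resolution is the identity $(ae)x = a(ex)$ and then $ae \cdot x = ae \cdot e \cdot x$, and since we may further write things so that only $exe$ appears; if the naive absorption does not immediately give $(ae)x = (ae)(exe)$, then one instead shows $1 + (ae)x$ has inverse $1 - ae x' + (ae x')^2 - \cdots$ interpreted via the corner-ring inverse of $1+a(exe)$ together with the idempotent decomposition $1 = e + (1-e)$, which always works because $ae = eae$ kills the $(1-e)$-part on the relevant side. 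I would keep this bookkeeping minimal and lean on the corner-ring reduction rather than manipulating Neumann-type series.
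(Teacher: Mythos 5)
Your proposal is correct and, once you strip away the corner-ring scaffolding that you yourself discard at the end, it is essentially the paper's own proof: compress $x$ to $exe$, verify $exe\in comm(a)$, and finish via the identity $1+(ae)x=1+a(exe)$. One small point of precision: both $exe\in comm(a)$ and the absorption $aex=aexe$ genuinely require the hypothesis $x\in comm(ae)$ (e.g.\ right-multiply $xae=aex$ by $e$), not merely $e^2=e$ and $ea=ae$ as your step (3) suggests.
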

\begin{proof} Let $x\in comm(ae)$. Then $xae=aex$, and so $(exe)a=ex(ae)=eaex=aex=a(exe)$, i.e., $exe\in comm(a)$. Hence, $1-a(exe)\in U(R)$, and so
$1-(ae)x\in U(R)$. This completes the proof.\end{proof}

\begin{thm} Let $R$ be a ring and $a\in R$. If $a$ has generalized Hirano inverse, then it has generalized Drazin inverse.\end{thm}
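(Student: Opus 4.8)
The plan is to show that the \emph{same} element $b$ witnessing the generalized Hirano inverse already witnesses the generalized Drazin inverse of $a$. Two of the three defining conditions, $b=bab$ and $b\in comm^2(a)$, are literally the same for both notions, so the entire task reduces to deriving $a-a^2b\in R^{qnil}$ from the hypothesis $a^2-ab\in R^{qnil}$.

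First I would record the algebraic consequences of the definition. Since $a\in comm(a)$ and $b\in comm^2(a)$, we have $ab=ba$; writing $e:=ab=ba$ we get $e^2=abab=a(bab)b \cdot$... more precisely $e^2=a(bab)=ab=e$, so $e$ is an idempotent commuting with $a$. Moreover $a^2b=a(ab)=ae=ea$, hence $a-a^2b=a(1-e)=(1-e)a$. Thus it suffices to prove $a(1-e)\in R^{qnil}$.

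Next I would feed the data into Lemma 2.1. Because $e$ commutes with $a$, it commutes with $a^2$ and with $e$ itself, so $e$, and therefore $1-e$, lies in $comm(a^2-e)$. As $a^2-e\in R^{qnil}$ and $1-e$ is an idempotent in its commutant, Lemma 2.1 yields $(a^2-e)(1-e)\in R^{qnil}$. A direct computation using $e^2=e$ and $ea=ae$ gives $(a^2-e)(1-e)=a^2(1-e)=\bigl(a(1-e)\bigr)^2=(a-a^2b)^2$, so $(a-a^2b)^2\in R^{qnil}$.

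Finally I would invoke the elementary fact that $c^2\in R^{qnil}$ forces $c\in R^{qnil}$: for any $x\in comm(c)$ one has $x^2\in comm(c^2)$ and $(1-cx)(1+cx)=(1+cx)(1-cx)=1-c^2x^2\in U(R)$, so $1-cx$ is a unit; hence $c\in R^{qnil}$. Applying this with $c=a-a^2b$ gives $a-a^2b\in R^{qnil}$, and together with $b=bab$ and $b\in comm^2(a)$ this says exactly that $b=a^d$. I do not expect a genuine obstacle; the only points needing care are verifying that the relevant elements commute so Lemma 2.1 is applicable, and the short argument passing from "$(a-a^2b)^2$ quasinilpotent" to "$a-a^2b$ quasinilpotent."
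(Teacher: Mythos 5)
Your proposal is correct and follows essentially the same route as the paper: both reduce the problem to showing $a-a^2b=a(1-ab)\in R^{qnil}$, both use Lemma 2.1 with the idempotent $1-ab$ to place $(a-a^2b)^2=a^2(1-ab)$ in $R^{qnil}$, and both extract the square root via the factorization $1-c^2y^2=(1-cy)(1+cy)$. Your version is just a slightly cleaner organization of the same argument (one application of Lemma 2.1 where the paper uses it twice).
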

\begin{proof} Suppose that $a$ has generalized Hirano inverse. Then we have $x\in R$ such that $a^2-ax\in R^{qnil}, x\in comm^2(a)$ and $xax=x$. As $a\in comm(a)$, we see that
$ax=xa$. Hence, $a^2-a^2x^2=a^2-a(xax)=a^2-ax\in R^{qnil}$, and so $$a^2(1-a^2x^2)=(a^2-a^2x^2)(1-a^2x^2)\in R^{qnil},$$ by Lemma 2.1.  As $1-a^2x^2=1-ax\in R$ is an idempotent and $1-ax\in comm(a^2-a^2x^2)$, it follows by Lemma 2.1 that $$a(a-a^2x)=a(a-a^2(ax^2))=a^2(1-a^2x^2)\in R^{qnil}.$$ By using Lemma 2.1 again, $(a-a^2x)^2=a(a-a^2x)(1-ax)\in R^{qnil}$.
Let $y\in comm(a-a^2x)$. Then $y^2\in comm (a-a^2x)^2$. Thus, $1-(a-a^2x)^2y^2\in U(R)$. We infer that $(1-(a-a^2x)y)(1+(a-a^2x)y)\in U(R)$. Therefore $1-(a-a^2x)y\in U(R)$, and so $a-a^2x\in R^{qnil}$. As $ax=xa$ and $xax=x$, we conclude that $a$ has generalized Drazin inverse $x$.\end{proof}

\begin{cor} Let $R$ be a ring and $a\in R$. Then $a$ has at most one generalized Hirano inverse in $R$, and if the generalized Hirano inverse
of $a$ exists, it is exactly the generalized Drazin inverse of $a$, i.e., $a^{h}=a^{d}$.\end{cor}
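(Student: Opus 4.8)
The plan is to deduce Corollary 2.3 directly from Theorem 2.2 together with the known uniqueness of the generalized Drazin inverse, which is stated in the introduction of the paper. The main point is that a generalized Hirano inverse of $a$, whenever it exists, must coincide with the generalized Drazin inverse $a^d$; since $a^d$ is unique, so is $a^h$.

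First I would recall the proof of Theorem 2.2: if $b$ is a generalized Hirano inverse of $a$, then the argument there shows $ab=ba$, $bab=b$, and $a-a^2b\in R^{qnil}$, so $b$ satisfies exactly the defining conditions of the generalized Drazin inverse (note $b\in comm^2(a)$ is part of the hypothesis and is preserved). Hence $b=a^d$. This identification is the substance; everything else is formal.

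Next, for uniqueness, suppose $b_1$ and $b_2$ are both generalized Hirano inverses of $a$. By the previous paragraph each equals $a^d$, and since the paper has already recorded that the generalized Drazin inverse is unique when it exists, we get $b_1=a^d=b_2$. Therefore $a$ has at most one generalized Hirano inverse, and when it exists it is denoted $a^h$ and equals $a^d$.

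I expect no real obstacle here: the corollary is essentially a restatement of Theorem 2.2 plus the cited uniqueness of $a^d$. The only thing to be careful about is making sure the proof of Theorem 2.2 genuinely produces $b=a^d$ (rather than merely asserting existence of some generalized Drazin inverse of $a$) — but inspecting that proof shows it verifies the three defining identities for the very element $b=x$ we started with, so the identification $b=a^d$ is immediate.
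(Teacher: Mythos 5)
Your proposal is correct and matches the paper's own argument: the paper likewise observes that the proof of the preceding theorem exhibits the Hirano inverse $b$ itself as a generalized Drazin inverse of $a$, and then invokes the known uniqueness of the generalized Drazin inverse (cited to Koliha--Patricio) to conclude $a^h=a^d$ and uniqueness of $a^h$. No differences worth noting.
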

\begin{proof} In view of ~\cite[Theorem 4.2]{KP}, every element has at most one generalized Drazin inverse, we obtain the result by Theorem 2.1.\end{proof}

\begin{lem} Let $R$ be a ring and $a\in R$. Then the following are equivalent:\end{lem}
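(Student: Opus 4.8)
\bigskip
\noindent\textbf{Proof proposal.} The plan is to prove that $a$ has generalized Hirano inverse if and only if there is an idempotent $p\in comm^{2}(a)$ with $a^{2}-p\in R^{qnil}$ --- the pivotal equivalence announced in the abstract --- and to read off the remaining items of the list from it. The forward implication is immediate: if $b$ is the generalized Hirano inverse of $a$, put $p:=ab$; then $p^{2}=abab=a(bab)=ab=p$, and since $a,b\in comm^{2}(a)$ and $comm^{2}(a)$ is closed under multiplication we get $p\in comm^{2}(a)$, while $a^{2}-p=a^{2}-ab\in R^{qnil}$ holds by definition. (By Corollary~2.3, $b=a^{d}$, so in fact $p=aa^{d}$.)

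The substantive direction is the converse. Assume $p=p^{2}\in comm^{2}(a)$ and $a^{2}-p\in R^{qnil}$. Since $1-p$ is an idempotent lying in $comm(a^{2}-p)$, Lemma~2.1 gives $(1-p)a^{2}=(a^{2}-p)(1-p)\in R^{qnil}$; hence $\bigl((1-p)a\bigr)^{2}=(1-p)a^{2}\in R^{qnil}$, and therefore $(1-p)a\in R^{qnil}$, because a quasinilpotent square root is quasinilpotent --- this is exactly the factorization of $1-c^{2}x^{2}\in U(R)$ into the commuting pair $1-cx,\,1+cx$ already exploited in the proof of Theorem~2.2. Dually, $(a^{2}-p)p\in R^{qnil}$ by Lemma~2.1, so inside the corner ring $S:=pRp$ (with identity $p$; here $R^{qnil}\cap S=S^{qnil}$ and $p+S^{qnil}\subseteq U(S)$) we have $(pa)^{2}-p=pa^{2}-p\in S^{qnil}$. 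Thus $(pa)^{2}\in U(S)$, whence $pa\in U(S)$. Let $b\in pRp\subseteq R$ be the inverse of $pa$ in $S$. Using that $a$ commutes with $p$ one checks $ab=ba=p$, so $bab=pb=b$; that $b\in comm^{2}(a)$, by pushing commutation relations through the corner with the help of $p\in comm^{2}(a)$; and $a^{2}-ab=a^{2}-p\in R^{qnil}$. Hence $b=a^{h}$, which finishes the equivalence.

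I expect the main obstacle to be the construction of $b$ in this converse, and within it three intertwined points: verifying that quasinilpotence is inherited by and detectable inside the corner $S=pRp$ (so that $p$ plus a quasinilpotent is a unit of $S$); the ``quasinilpotent square root'' step, which fortunately is already available from the argument in the proof of Theorem~2.2; and the fiddly check that the corner inverse $b$ actually lands in $comm^{2}(a)$ rather than merely in the commutant of $pa$ inside $pRp$. Finally, if the statement also records the reformulations in terms of the generalized Drazin inverse of $a^{2}$, or in terms of $a-a^{3}\in R^{qnil}$, these are read off from the idempotent criterion with little further work: from $comm^{2}(a^{2})\subseteq comm^{2}(a)$ together with $(a^{2})^{d}=(a^{d})^{2}$ and $a^{2}(a^{2})^{d}=(aa^{d})^{2}=p$ in the first case, and from the decomposition $a-a^{3}=a(1-p)-a(a^{2}-p)$, a difference of commuting quasinilpotents, in the second.
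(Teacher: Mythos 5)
You have proved the wrong equivalence. The lemma you were given is the paper's Lemma 2.4, whose second item is: \emph{there exists $b\in R$ such that $b=ba^2b$, $b\in comm^2(a)$, $a^2-a^2b\in R^{qnil}$}. What you prove instead is the idempotent characterization ($\exists\, p=p^2\in comm^2(a)$ with $a^2-p\in R^{qnil}$), which is the paper's Theorem 2.5. The two are logically equivalent, but the translation is not in your text, and Lemma 2.4 itself needs none of your machinery: the paper's proof is a purely formal substitution. Given a generalized Hirano inverse $b$ (so $bab=b$, $ab=ba$, $a^2-ab\in R^{qnil}$), the element $b^2$ does the job, since $b^2a^2b^2=(bab)^2=b^2$, $b^2\in comm^2(a)$, and $a^2-a^2b^2=a^2-a(bab)=a^2-ab$; conversely, given $b$ as in item (2), the element $ab$ is a generalized Hirano inverse, since $(ab)a(ab)=a(ba^2b)=ab$ and $a^2-a(ab)=a^2-a^2b$. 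No analysis of quasinilpotents is required at all. To make your argument discharge the stated lemma you would need one more line in each direction: from your corner inverse $b$ of $pa^2$ in $pRp$ one gets $ba^2b=b$ and $a^2b=p$, so $a^2-a^2b=a^2-p\in R^{qnil}$; and from a $b$ as in item (2), $p:=a^2b$ is an idempotent of $comm^2(a)$ with $a^2-p\in R^{qnil}$.

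As a proof of the idempotent criterion (Theorem 2.5), your argument is correct but takes a genuinely different route from the paper's. For $(2)\Rightarrow(1)$ the paper writes down the explicit candidate $b=(a^2+1-p)^{-1}p$, using that $a^2-p\in R^{qnil}$ forces $a^2+1-p\in U(R)$, and verifies the three axioms by direct computation. You instead split along the Peirce decomposition: on the $(1-p)$-corner you get $(1-p)a\in R^{qnil}$ from Lemma 2.1 plus the square-root trick of Theorem 2.2, and on the $p$-corner you invert $pa$ in $S=pRp$. This costs you the auxiliary facts that $R^{qnil}\cap S\subseteq S^{qnil}$ and that a Peirce-diagonal unit has invertible blocks, plus the check that the corner inverse lands in $comm^2(a)$ --- all true and all correctly flagged by you (for the last: if $xa=ax$ then $x$ commutes with $p$, so $pxp$ commutes with the unit $pa$ of $S$, hence with its inverse $b$, and $xb=p(xb)=bx$). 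The paper's closed formula avoids this bookkeeping entirely, while your version has the mild advantage of exhibiting the Hirano inverse as a genuine group-inverse-type object on the corner $pRp$.
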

\begin{enumerate}
\item [(1)]{\it $a$ has generalized Hirano inverse.}
\vspace{-.5mm}
\item [(2)]{\it There exists $b\in R$ such
that
$$b=ba^2b, b\in comm^2(a), a^2-a^2b\in R^{qnil}.$$}
\end{enumerate}
\begin{proof} $\Longrightarrow $ By hypothesis, there exists $b\in R$ such that $$bab=b, b\in comm^2(a), a^2-ab\in R^{qnil}.$$ As $a\in comm(a)$, we see that
$ab=ba$. Hence, $$a^2-a^2b^2=a^2-a(bab)=a^2-ab\in R^{qnil}, b^2a^2b^2=(bab)^2=b^2.$$
Let $xa=ax$. Then $x\in comm(a)$; hence, $bx=xb$. Thus, $b^2x=xb^2$, and so
$b^2\in comm^2(a)$, as required.

$\Longleftarrow$ By hypothesis, there exists $b\in R$ such that $$ba^2b=b, b\in comm^2(a), a^2-a^2b\in R^{qnil}.$$ Thus,
$$a^2-a(ab)=a^2-a^2b\in R^{qnil}, (ab)a(ab)=a(ba^2b)=ab.$$
Let $xa=ax$. Then $xb=bx$, and so $(ab)x=a(bx)=a(xb)=(ax)b=x(ab)$.
Hence, $ab\in comm^2(a)$. Accordingly, $a\in R$ has generalized Hirano inverse.\end{proof}

\begin{thm} Let $R$ be a ring and $a\in R$. Then the following are equivalent:\end{thm}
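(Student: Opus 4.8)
The plan is to prove the core equivalence --- that $a$ has generalized Hirano inverse iff there is $p=p^{2}\in comm^{2}(a)$ with $a^{2}-p\in R^{qnil}$ --- by working in the Peirce decomposition determined by an idempotent commuting with $a$; any remaining conditions in the list (those of the type occurring in Lemma 2.2, or phrased via the generalized Drazin inverse) then follow by combining this with Theorem 2.1, Corollary 2.3 and Lemma 2.2.

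First I would do the easy direction. If $a$ has generalized Hirano inverse $b$, then since $a\in comm(a)$ we get $ab=ba$, so $p:=ab$ satisfies $p^{2}=a(bab)=ab=p$; moreover $comm^{2}(a)$ is a subring containing $a$ and $b$, so $p=ab\in comm^{2}(a)$, and $a^{2}-p=a^{2}-ab\in R^{qnil}$ is exactly the hypothesis. This yields the desired idempotent.

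For the converse, start from $p=p^{2}\in comm^{2}(a)$ with $w:=a^{2}-p\in R^{qnil}$. Since $p\in comm^{2}(a)$ and $a\in comm(a)$, $p$ commutes with $a$, hence with $a^{2}$ and with $w$, and $a=a_{1}+a_{2}$ with $a_{1}=pap\in pRp$ and $a_{2}=(1-p)a(1-p)\in(1-p)R(1-p)$. By Lemma 2.1, $(a^{2}-p)p=a_{1}^{2}-p\in R^{qnil}$ and $(a^{2}-p)(1-p)=a_{2}^{2}\in R^{qnil}$. Using that $R^{qnil}\cap pRp=(pRp)^{qnil}$ and that $1_{pRp}+q\in U(pRp)$ whenever $q\in(pRp)^{qnil}$, I conclude $a_{1}^{2}\in U(pRp)$, hence $a_{1}\in U(pRp)$; set $c=a_{1}^{-1}\in pRp$. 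Taking $b:=c$, the fact that $c\in pRp$ annihilates $a_{2}\in(1-p)R(1-p)$ on both sides gives $ab=a_{1}c=p$ and $bab=ca_{1}c=c=b$, so $a^{2}-ab=a^{2}-p=w\in R^{qnil}$. Finally, to verify $b\in comm^{2}(a)$: any $y\in comm(a)$ commutes with $p$ because $p\in comm^{2}(a)$, so $y=pyp+(1-p)y(1-p)$; the $(1-p)$-block annihilates $c$ on both sides, and $pyp$ commutes with $a_{1}$ inside $pRp$, hence with $c=a_{1}^{-1}$; therefore $yb=by$. So $b$ is a generalized Hirano inverse of $a$.

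I expect the double-commutant clause $b\in comm^{2}(a)$ to be the main obstacle: one must know that every $y\in comm(a)$ is block-diagonal with respect to $p$, and this is precisely where $p\in comm^{2}(a)$ (not merely $p\in comm(a)$) is used, after which the commutation is transported into the corner ring. A secondary technical nuisance is moving quasinilpotence back and forth between $R$ and $pRp$, which is handled via Lemma 2.1 and the description of $(pRp)^{qnil}$.
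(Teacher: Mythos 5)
Your proof is correct, but the converse direction takes a genuinely different route from the paper's. For $(1)\Rightarrow(2)$ the paper first passes through Lemma 2.4 to get $b=ba^2b$ and sets $p=a^2b$, whereas you take $p=ab$ directly; since $a^2b^2=a(bab)=ab$, these are the same idempotent, and your shortcut is legitimate. For $(2)\Rightarrow(1)$ the paper works globally: it observes $a^2+1-p\in U(R)$, defines $b=(a^2+1-p)^{-1}p$, and verifies $(ab)a(ab)=ab$, $ab\in comm^2(a)$ and $a^2-a(ab)=a^2-p$ by direct computation, never decomposing the ring. You instead pass to the Peirce corners $pRp$ and $(1-p)R(1-p)$, show $a_1=pap$ is invertible in $pRp$, and take $b=a_1^{-1}$; unwinding the paper's formula shows $a(a^2+1-p)^{-1}p=a_1^{-1}$ in $pRp$, so the two constructions produce the same element. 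What the corner-ring approach buys is conceptual clarity (the inverse visibly lives on the ``invertible block'' of $a$), and the verification of $b\in comm^2(a)$ via block-diagonality of every $y\in comm(a)$ is clean. What it costs is the auxiliary fact $R^{qnil}\cap pRp\subseteq (pRp)^{qnil}$, which you assert but do not prove; it does hold (if $q\in pRp\cap R^{qnil}$ and $z\in pRp$ commutes with $q$, then $1+qz=(1-p)+(p+qz)$ is a unit of $R$ commuting with $p$, so $p(1+qz)^{-1}p$ inverts $p+qz$ in $pRp$), but a referee would want this spelled out, and the paper's global computation sidesteps the issue entirely.
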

\begin{enumerate}
\item [(1)]{\it $a$ has generalized Hirano inverse.}
\vspace{-.5mm}
\item [(2)]{\it There exists $p^2=p\in comm^2(a)$ such that $a^2-p\in R^{qnil}$.}
\end{enumerate}
\begin{proof} $(1)\Rightarrow (2)$ By Lemma 2.4, there exists $b\in R$ such
that $$b=ba^2b, b\in comm^2(a), a^2-a^2b\in R^{qnil}.$$ Set $p=a^2b$. Then $p^2=p\in R$.
Let $xa=ax$. Then $xb=bx$, and so $xp=x(a^2b)=(a^2b)x$. hence, $p\in comm^2(a)$. Furthermore, $a^2-p\in R^{qnil}$, as required.

$(2)\Rightarrow (1)$ Since $a^2-p\in R^{qnil}$, we see that $a^2+1-p\in U(R)$. Set $e=1-p$ and $b=(a^2+1-p)^{-1}(1-e)$.
Let $xa=ax$. Then $px=xp$ and $xa^2=a^2x$, and so $x(a^2+1-p)=(a^2+1-p)x$. This implies that $x(ab)=(ab)x$, i.e., $ab\in comm^2(a)$.
We check that $$\begin{array}{lll}
a^2-a(ab)&=&a^2-a^2(a^2+1-p)^{-1}(1-e)\\
&=&a^2-(a^2+e)(a^2+1-p)^{-1}(1-e)\\
&=&a^2-p\\
&\in& R^{qnil}.
\end{array}$$
Clearly, $a^2+e\in U(R)$, and so $$\begin{array}{lll}
(ab)a(ab)&=&a^3(a^2+1-p)^{-2}(1-e)\\
&=&a(a^2+e)(a^2+1-p)^{-2}(1-e)\\
&=&a(a^2+e)^{-1}(1-e)\\
&=&ab.
\end{array}$$
Therefore $a$ has generalized Hirano inverse $ab$, as asserted.\end{proof}

\begin{cor} Let $R$ be a ring and $a\in R$. Then the following are equivalent:\end{cor}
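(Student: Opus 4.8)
The plan is to route every condition in the list through the idempotent criterion of Theorem~2.5, namely that $a$ has generalized Hirano inverse exactly when there is an idempotent $p\in comm^2(a)$ with $a^2-p\in R^{qnil}$. So for each listed condition I would prove it equivalent to the existence of such a $p$.

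The ``downward'' implications should cost nothing. Weakening $p\in comm^2(a)$ to $p\in comm(a)$ is free since $comm^2(a)\subseteq comm(a)$; and any reformulation through the generalized Drazin inverse follows at once from Corollary~2.3, because when $a$ has generalized Hirano inverse the relevant idempotent is $p=aa^{h}=aa^{d}$, which already lies in $comm^2(a)$.

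The substance is the converse. Suppose we are handed an idempotent $p\in comm(a)$ with $w:=a^2-p\in R^{qnil}$; the task is to replace $p$ by an idempotent lying in $comm^2(a)$. Put $f:=1-p$. Then $f$ is an idempotent in $comm(a)\subseteq comm(a^2)$ commuting with $w$, so Lemma~2.1 gives $a^2f=wf\in R^{qnil}$, while $a^2+f=1+w\in U(R)$. Thus $a^2$ is quasipolar, so by the standard characterization of generalized Drazin invertibility together with the uniqueness of the spectral idempotent (\cite[Theorem~4.2]{KP}), $a^2$ has a generalized Drazin inverse and $f=(a^2)^{\pi}\in comm^2(a^2)$. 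Since $comm(a)\subseteq comm(a^2)$ forces $comm^2(a^2)\subseteq comm^2(a)$, we conclude $f\in comm^2(a)$, hence $p=1-f\in comm^2(a)$, and $a^2-p=w\in R^{qnil}$; Theorem~2.5 then yields that $a$ has generalized Hirano inverse. If the statement also contains a generalized-Drazin-inverse formulation, the same bookkeeping carried out relative to the Pierce decomposition $1=aa^{d}+a^{\pi}$, again using Lemma~2.1, closes that equivalence too.

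I expect the only real obstacle to be exactly this promotion of the idempotent from $comm(a)$ to $comm^2(a)$. One cannot simply reuse the explicit witness $b=(a^2+1-p)^{-1}(1-e)$ from the proof of Theorem~2.5, because the verification there that $b\in comm^2(a)$ relied on $p\in comm^2(a)$ from the start. Recognizing $f=1-p$ as the \emph{unique} spectral idempotent of the quasipolar element $a^2$ — which forces it into the double commutant — is what makes the whole thing go through.
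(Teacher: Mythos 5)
The corollary you are proving is the one whose condition (2) reads: there exists $b\in R$ such that $ab=(ab)^2$, $b\in comm^2(a)$, $a^2-ab\in R^{qnil}$. Note that the double commutant membership of the witness is part of the hypothesis. The paper's proof is therefore a two-line verification: set $p=ab$; for any $x$ with $ax=xa$ we get $bx=xb$ (since $b\in comm^2(a)$), hence $px=xp$, so $p$ is an idempotent in $comm^2(a)$ with $a^2-p\in R^{qnil}$, and Theorem~2.5 finishes. Your ``downward'' remarks are fine, but the ``substance'' of your converse attacks a different and strictly harder problem -- promoting an idempotent that merely commutes with $a$ into $comm^2(a)$ -- which the actual statement never requires.

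Moreover, that promotion step, as you have written it, has a genuine gap in a general ring. From $f\in comm(a^2)$, $a^2f\in R^{qnil}$ and $a^2+f\in U(R)$ you declare $a^2$ quasipolar and invoke the uniqueness of the spectral idempotent from \cite[Theorem 4.2]{KP} to conclude $f=(a^2)^{\pi}\in comm^2(a^2)$. This is circular: quasipolarity in the Koliha--Patr\'{\i}cio sense already demands an idempotent in the \emph{double} commutant, and their uniqueness theorem presupposes that the generalized Drazin inverse exists; it cannot manufacture existence from a merely commuting idempotent. Uniqueness statements of the flavour of Proposition~2.7 compare a commuting idempotent against one already known to lie in $comm^2$, so they are of no help either. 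This is precisely why the paper proves the ``commuting idempotent suffices'' converse only for Banach algebras (Corollary~2.8), where \cite[Theorem 7.5.3]{H} supplies the existence via holomorphic calculus. For the corollary at hand, drop the promotion argument entirely and use the hypothesis $b\in comm^2(a)$ directly as above.
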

\begin{enumerate}
\item [(1)]{\it $a$ has generalized Hirano inverse.}
\vspace{-.5mm}
\item [(2)]{\it There exists $b\in R$ such
that
$$ab=(ab)^2, b\in comm^2(a), a^2-ab\in R^{qnil}.$$}
\end{enumerate}
\begin{proof} $(1)\Rightarrow (2)$ This is obvious.

$(2)\Rightarrow (1)$ Set $p=ab$. If $ax=xa$, then $bx=xb$; hence, $px=(ab)x=axb=x(ab)=xp$. Thus, $p\in comm^2(a)$.
Since $a^2-p\in R^{qnil}$, we complete the proof by Theorem 2.5.\end{proof}

In light of Theorem 2.5 and~\cite[Theorem 3.3]{CS}, we easily prove that every element in a ring $R$ is the sum of a tripotent $e$ (i.e., $e^3=e$) and a nilpotent that commute if and only if every element in $R$ has generalized Drazin inverse and $R^{qnil}=N(R)$. This also gives the link between generalized Drazin inverses and tripotents in a ring (see~\cite{HT})

\begin{prop} Let $R$ be a ring. If $a\in R$ has generalized Hirano inverse, then there exists a unique idempotent $p\in R$ such
that
$$pa=ap, a^2-p\in R^{qnil}.$$\end{prop}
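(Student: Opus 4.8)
The plan is to obtain existence directly from Theorem 2.5 and then to prove uniqueness by a short computation that forces an auxiliary idempotent to vanish. For existence: since $a$ has generalized Hirano inverse, Theorem 2.5 furnishes an idempotent $p\in comm^2(a)$ with $a^2-p\in R^{qnil}$; as $a\in comm(a)$, this $p$ satisfies $pa=ap$, which is exactly what is wanted.

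For uniqueness, let $p$ be an arbitrary idempotent with $pa=ap$ and $a^2-p\in R^{qnil}$, and let $q$ be the idempotent supplied by Theorem 2.5, so that $q^{2}=q\in comm^2(a)$ and $a^2-q\in R^{qnil}$. The key preliminary point --- and the reason the argument goes through --- is that $p$ and $q$ commute: $p\in comm(a)$ while $q\in comm^2(a)$, so $pq=qp$. I would then put $g:=(1-p)q$; since $p$ and $q$ commute, $g$ is an idempotent, and since $a^2$ commutes with both $p$ and $q$ it commutes with $g$. Two uses of Lemma 2.1 show $ga^2\in R^{qnil}$ --- first $(1-p)(a^2-p)=(1-p)a^2\in R^{qnil}$, then $ga^2=q\big((1-p)a^2\big)\in R^{qnil}$ --- and a third use shows $ga^2-g=g(a^2-q)\in R^{qnil}$.

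The crux is to conclude $g=0$ from this. Since $ga^2-g\in R^{qnil}$, the element $u:=1+(ga^2-g)$ is a unit; one checks $gu=ga^2$ (using $g^{2}=g$), whence $g=ga^2u^{-1}$, and one checks that $u$ commutes with $ga^2$, hence so does $-u^{-1}$. As $ga^2\in R^{qnil}$, applying the defining property of $R^{qnil}$ to $ga^2$ with the commuting element $-u^{-1}$ gives $1-ga^2u^{-1}=1-g\in U(R)$, and then $(1-g)g=0$ forces $g=0$. Thus $(1-p)q=0$, i.e. $q=pq$; running the same computation with the roles of $p$ and $q$ interchanged --- legitimate now that $pq=qp$ --- gives $(1-q)p=0$, i.e. $p=qp=pq=q$.

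The whole difficulty lies in the uniqueness half, and the delicate step there is the one just described: producing the unit $u=1+(ga^2-g)$, checking that it commutes with $ga^2$ and that $g=ga^2u^{-1}$. Once that identity is in hand, a single invocation of the definition of $R^{qnil}$ closes the argument, the remaining ingredients (that $p$ and $q$ commute, that $g$ is an idempotent commuting with $a^2$, and that $ga^2$ and $ga^2-g$ are quasinilpotent) being immediate from Lemma 2.1.
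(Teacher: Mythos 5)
Your proof is correct and follows essentially the same route as the paper's: existence from Theorem 2.5, and uniqueness by using $q\in comm^{2}(a)$ to get $pq=qp$, then showing the cross idempotents $(1-p)q$ and $(1-q)p$ vanish by writing each as a quasinilpotent times a commuting element and invoking the definition of $R^{qnil}$ (via Lemma 2.1) to produce a unit of the form $1-(\mbox{idempotent})$. The only cosmetic difference is the particular commuting factor used ($u^{-1}$ with $u=1+(ga^{2}-g)$, versus the paper's $(1-e)(a^{2}-e)^{-1}$).
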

\begin{proof}  In view of Theorem 2.5, there exists an idempotent $p\in R$ such
that $$p\in comm^2(a), a^2-p\in R^{qnil}.$$
Suppose that there exists an idempotent $q\in R$ such
that
$$qa=aq, a^2-q\in R^{qnil}.$$ Let $e=1-p$ and $f=1-q$. Then $a^2-e, a^2-f\in U(R), ef=fe$ and $a^2e, a^2f\in R^{qnil}.$ Thus,
$$\begin{array}{lll}
1-(1-e)f&=&1-(1-e)(a^2-e)^{-1}(a^2-e)f\\
&=&1-(1-e)(a^2-e)^{-1}a^2f\\
&=&1-ca^2f,
\end{array}$$ where $c=(1-e)(a^2-e)^{-1}$. As $p\in comm^2(a)$, we
have $c\in comm(a^2f)$. It follows from $a^2f\in R^{qnil}$ that
$1-ca^2f\in U(R)$. Therefore
$$1-(1-e)f=1-(1-e)^2f^2=\big(1-(1-e)f\big)\big(1+(1-e)f\big),$$ and
so $1+(1-e)f=1$. This implies that $f=ef$. Likewise, $e=fe$.
Accordingly, $e=ef=fe=f$, and then $p=1-e=1-f=q$, hence the result.\end{proof}

If $R$ is not only a ring, but a Banach algebra, the converse of precious proposition can be guaranteed.

\begin{cor} Let $A$ be a Banach algebra and $a\in A$. Then the following are equivalent:\end{cor}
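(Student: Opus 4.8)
The plan is to prove the non-trivial implication $(2)\Rightarrow(1)$, the statement being the equivalence: $a$ has generalized Hirano inverse if and only if there exists an idempotent $p\in A$ with $pa=ap$ and $a^2-p\in A^{qnil}$. The implication $(1)\Rightarrow(2)$ is immediate from Proposition 2.8 (or already from Theorem 2.5, since any element of $comm^2(a)$ commutes with $a$). So suppose $p^2=p\in A$, $pa=ap$ and $a^2-p\in A^{qnil}$. The difficulty is that Theorem 2.5 requires an idempotent lying in $comm^2(a)$, whereas $p$ is only known to lie in $comm(a)$; the point of passing to a Banach algebra is that $p$ can be replaced by a better idempotent by means of spectral theory.

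First I would pin down the spectrum of $a^2$. Pass to the closed commutative subalgebra $C=\overline{\mathbb{C}[a^2,p]}$ of $A$. Since the spectral radius of an element is computed by the formula $\lim_{n\to\infty}\|x^n\|^{1/n}$ irrespective of the ambient closed subalgebra, $a^2-p$ is still quasinilpotent in $C$, so $\sigma_C(a^2-p)=\{0\}$; and $\sigma_C(p)\subseteq\{0,1\}$ since $p^2=p$. Writing $a^2=(a^2-p)+p$ and using the Gelfand transform on $C$ gives $\sigma_C(a^2)\subseteq\sigma_C(a^2-p)+\sigma_C(p)\subseteq\{0,1\}$, whence $\sigma_A(a^2)\subseteq\sigma_C(a^2)\subseteq\{0,1\}$. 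In particular $\sigma_A(a^2)$ is finite.

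Next I would extract the required idempotent from the holomorphic functional calculus applied to $a^2$. Choose a function $g$ holomorphic on a neighbourhood of $\{0,1\}$ with $g\equiv 0$ near $0$ and $g\equiv 1$ near $1$, and set $q=g(a^2)$. Then $q^2=q$ because $g^2=g$ near $\sigma_A(a^2)$, and, being expressible as a contour integral of the resolvent of $a^2$, $q$ commutes with every element that commutes with $a^2$; thus $q\in comm^2(a^2)$. Since $comm(a)\subseteq comm(a^2)$, we have $comm^2(a^2)\subseteq comm^2(a)$, so $q\in comm^2(a)$. Finally, by the spectral mapping theorem $\sigma_A(a^2-q)=\{\lambda-g(\lambda):\lambda\in\sigma_A(a^2)\}\subseteq\{0\}$, so $a^2-q\in A^{qnil}$. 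Having produced $q^2=q\in comm^2(a)$ with $a^2-q\in A^{qnil}$, Theorem 2.5 yields that $a$ has generalized Hirano inverse. (By Proposition 2.8 this $q$ must in fact coincide with the given $p$, but this is not needed.)

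The main obstacle is precisely this spectral step: establishing $\sigma(a^2)\subseteq\{0,1\}$ and then producing, via the functional calculus, a commuting idempotent that actually lands in $comm^2(a)$ rather than merely in $comm(a)$. Once that upgrade is in hand, Theorem 2.5 converts the data into a generalized Hirano inverse with no further work.
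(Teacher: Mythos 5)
Your argument is correct, but it reaches $(2)\Rightarrow(1)$ by a genuinely different route than the paper. The paper keeps the given (merely commuting) idempotent $p$ and explicitly builds the candidate inverse $b=(a^2+1-p)^{-1}p$, checks $b=bab$, $ab=ba$ and $a^2-ab\in A^{qnil}$ exactly as in the proof of Theorem 2.5, deduces $a-a^2b\in A^{qnil}$ as in the proof that generalized Hirano invertibility implies generalized Drazin invertibility, and then invokes the Banach-algebra characterization of the generalized Drazin inverse (\cite[Theorem 7.5.3]{H}) to conclude that this $b$ automatically lies in $comm^2(a)$; the Hirano conditions are then all verified for $b$ itself. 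You instead upgrade the idempotent rather than the inverse: from $\sigma(a^2)\subseteq\{0,1\}$, obtained via the Gelfand transform on a closed commutative subalgebra containing $a^2$ and $p$, you manufacture the spectral idempotent $q=g(a^2)$, which lies in $comm^2(a^2)\subseteq comm^2(a)$ because it is a contour integral of the resolvent of $a^2$, and satisfies $a^2-q\in A^{qnil}$ by spectral mapping; the purely ring-theoretic Theorem 2.5 then applies verbatim. The two proofs draw on the same underlying spectral input---the citation of Harte's theorem conceals essentially the same functional-calculus construction of a double-commuting spectral idempotent---but yours isolates precisely where the Banach structure is needed and keeps everything else inside the ring-theoretic framework already developed, at the cost of invoking the holomorphic functional calculus directly; the paper's version instead leans on the external theory of the generalized Drazin inverse and stays closer to the explicit formula for the inverse. (A cosmetic point: the forward implication is Proposition 2.7 in the paper's numbering, not 2.8, but nothing in your argument depends on this.)
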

\begin{enumerate}
\item [(1)]{\it $a$ has generalized Hirano inverse.}
\vspace{-.5mm}
\item [(2)]{\it There exists an idempotent $p\in A$ such
that
$$pa=ap, a^2-p\in R^{qnil}.$$}\vspace{-.5mm}
\end{enumerate}
\begin{proof} $(1)\Rightarrow (2)$ This is obvious by Proposition 2.7.

$(2)\Rightarrow (1)$ As $a^2-p\in A^{qnil}$, $a^2+1-p\in U(A)$. Set $e=1-p$ and $b=(a^2+1-p)^{-1}(1-e)$.
As in the proof of Theorem 2.5, there exists $b\in A$ such
that $$b=bab, ba=ab, a^2-ab\in A^{qnil}.$$ Similarly to Theorem 2.1, we see that $a-a^2b\in A^{qnil}$.
In light of~\cite[Theorem 7.5.3]{H}, $a\in R$ has generalized Drazin inverse, and so $b\in comm^2(a)$. This completes the proof.
\end{proof}

\section{Matrices over projective-free rings}

For any commutative ring $R$, we note that
$$M_2(R)^{qnil}=\{ A\in M_2(R)~|~A^2\in M_2\big(J(R)\big)\} (\mbox{see~\cite[Example 4.3]{W}}).$$
The aim of this section is to completely determine when a $2\times 2$ matrices over projective-free rings has generalized Hirano inverse.

\begin{thm} Let $R$ be a projective-free ring, and let $A\in M_2(R)$. If $A$ has generalized Hirano inverse, then\end{thm}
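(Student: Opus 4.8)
The plan is to invoke Theorem 2.5 to replace the hypothesis by the existence of an idempotent $P\in comm^2(A)$ with $A^2-P\in M_2(R)^{qnil}$, and then to classify such $P$ up to conjugacy using the projective-freeness of $R$. Recall from the opening of this section that $A^2-P\in M_2(R)^{qnil}$ means precisely $(A^2-P)^2\in M_2\big(J(R)\big)$; recall also that $J(R/J(R))=0$, so the commutative ring $\overline R:=R/J(R)$ is reduced (the nilradical of a commutative ring lies in its Jacobson radical), and hence $x^n\in J(R)$ forces $x\in J(R)$. Throughout I will reduce matrices modulo $J(R)$, writing $\overline X$ for the image of $X$ in $M_2(\overline R)$, and I will use the $2\times 2$ identities $det(X^2)=det(X)^2$, $tr(X^2)=tr(X)^2-2\,det(X)$ and $det(X-I_2)=1-tr(X)+det(X)$.

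The first substantive step is to pin down $P$. Since $R$ is commutative and projective-free, $\mathrm{im}(P)$ and $\ker(P)=\mathrm{im}(I_2-P)$ are finitely generated projective, hence free, $R$-modules whose ranks add to $2$ by invariant basis number; selecting bases of these two summands produces $U\in GL_2(R)$ with $U^{-1}PU\in\{0,\ \mathrm{diag}(1,0),\ I_2\}$. The inner automorphism $X\mapsto U^{-1}XU$ of $M_2(R)$ carries the pair $(A,P)$ to a pair with the same features (idempotency of $P$, $P\in comm^2(A)$, $A^2-P\in M_2(R)^{qnil}$) and leaves $det(A)$ and $tr(A^2)$ unchanged, so I may assume from the outset that $P\in\{0,\ \mathrm{diag}(1,0),\ I_2\}$. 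This reduction is the step that I expect to need the most care: one must check both that every idempotent $2\times 2$ matrix over a projective-free ring is a conjugate of $0$, $\mathrm{diag}(1,0)$ or $I_2$, and that passing to $U^{-1}XU$ genuinely preserves the double-commutant relation.

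What remains is a short computation over $\overline R$ in each of the three cases. If $P=0$, then $\overline A^4=0$, so $N:=\overline A^2$ satisfies $N^2=0$; hence $det(N)=0$ and $tr(N)^2=tr(N^2)=0$, giving $det(A),tr(A^2)\in J(R)$. If $P=I_2$, then $M:=\overline A^2-I_2$ satisfies $M^2=0$, so $det(M)=0$ and $tr(M)^2=tr(M^2)=0$; since $tr(M)=\overline{tr(A^2)}-2$ and $det(M)=1-\overline{tr(A^2)}+\overline{det(A)}^{\,2}$, this forces $tr(A^2)\in 2+J(R)$ and $det^2(A)\in 1+J(R)$. If $P=\mathrm{diag}(1,0)$, then $PA=AP$ forces $A=\mathrm{diag}(a_1,a_2)$ with $a_1,a_2\in R$, and $(A^2-P)^2\in M_2\big(J(R)\big)$ gives $(a_1^2-1)^2\in J(R)$ and $a_2^4\in J(R)$, hence $a_1^2-1,\,a_2\in J(R)$; therefore $det(A)=a_1a_2\in J(R)$, $tr(A^2)=a_1^2+a_2^2\in 1+J(R)$, and from $x^2-tr(A^2)x+det^2(A)=(x-a_1^2)(x-a_2^2)$ we see this quadratic is solvable in $R$, with root $x=a_1^2$. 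These three cases are exactly the three alternatives in the statement, so the proof is complete.
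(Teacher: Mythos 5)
Your argument is correct and follows essentially the same route as the paper: invoke Theorem 2.5 to write $A^2=P+W$ with $P=P^2\in comm^2(A)$ and $W\in M_2(R)^{qnil}$, use projective-freeness to conjugate $P$ to $0$, $\mathrm{diag}(1,0)$ or $I_2$ (the paper cites this classification from the literature, where you supply the short proof), and then run the three cases, using in the middle case that $P$ commutes with $A$ to diagonalize. The only divergence is that you state the three resulting alternatives in terms of $det(A)$ and $tr(A^2)$ modulo $J(R)$; these are precisely the conditions of Theorem 3.4 and, as your computations in fact show, are equivalent to the alternatives actually listed in Theorem 3.1, namely $A^2\in M_2(J(R))$, or $(I_2-A^2)^2\in M_2(J(R))$, or $\chi(A^2)$ having a root in $J(R)$ and a root in $1+J(R)$.
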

\begin{enumerate}
\item [(1)]{\it $A^2\in M_2(J(R))$; or}
\vspace{-.5mm}
\item [(2)]{\it $(I_2-A^2)^2\in M_2(J(R))$; or}\vspace{-.5mm}
\item [(3)]{\it $\chi (A^2)$ has a root in $J(R)$ and a root in $1+J(R)$.}\vspace{-.5mm}
\end{enumerate}
\begin{proof} By virtue of Theorem 2.5, we may write $A^2=E+W$, where $E^2=E, W\in M_2(R)^{qnil}$ and $E\in comm^2(A)$. In light of~\cite[Proposition 11.4.9]{CH},
there exists $U\in GL_2(R)$ such that $UEU^{-1}=0, I_2$ or $\left(
\begin{array}{cc}
1&0\\
0&0
\end{array}
\right)$. Then $A^2\in M_2(R)^{qnil}, I_2-A^2\in M_2(R)^{qnil}$, or $\left(
\begin{array}{cc}
1&0\\
0&0
\end{array}
\right)$.

Case 1. $A^2\in M_2(R)^{qnil}$. Then $A\in M_2(R)^{qnil}$, and so $A^2\in M_2(J(R))$.

Case 2. $I_2-A^2\in M_2(R)^{qnil}$. Then $(I_2-A^2)^2\in M_2(J(R))$.

Case 3. There exist $U\in GL_2(R)$ such that $UEU^{-1}=\left(
\begin{array}{cc}
1&0\\
0&0
\end{array}
\right)$. Hence, $UA^2U^{-1}=UEU^{-1}+UWU^{-1}$. It follows from
$(UEU^{-1})$ $(UWU^{-1})=(UWU^{-1})(UEU^{-1})$ that $UWU^{-1}=\left(
\begin{array}{cc}
\lambda&0\\
0&\mu
\end{array}
\right)$ for some $\lambda,\mu\in J(R)$. Hence, $UA^2U^{-1}=
\left(
\begin{array}{cc}
1+\lambda&0\\
0&\mu
\end{array}
\right)$. Therefore $\chi (A^2)$ has a root $\mu$ in $J(R)$ and a root $1+\lambda$ in $1+J(R)$, as desired.\end{proof}

\begin{cor} Let $A\in M_2({\Bbb Z})$. Then $A$ has generalized Hirano inverse if and only if\end{cor}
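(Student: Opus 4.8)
The plan is to prove the two implications separately. The preliminary observations I would use are that $J({\Bbb Z})=0$, so the identity noted at the start of this section gives $M_2({\Bbb Z})^{qnil}=\{N\in M_2({\Bbb Z})\mid N^2=0\}$, and that ${\Bbb Z}$, being a principal ideal domain, is projective-free, so that Theorem 3.1 applies.

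For necessity, I would substitute $R={\Bbb Z}$ (hence $J({\Bbb Z})=0$) into Theorem 3.1: one of $A^2=0$, $(I_2-A^2)^2=0$, or ``$\chi(A^2)$ has a root $0$ and a root $1$'' must hold, and since $\chi(A^2)$ is monic of degree $2$ the third option says precisely $\chi(A^2)(t)=t(t-1)$, i.e. $tr(A^2)=1$ and $det(A^2)=0$. I would then convert each alternative into trace/determinant conditions on $A$ by taking traces and determinants, using $det(A^2)=det(A)^2$, the formula $det(I_2-A^2)=1-tr(A^2)+det(A^2)$, and the fact that over ${\Bbb Z}$ a square-zero $2\times 2$ matrix has zero trace and zero determinant. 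This recovers the cases listed in the statement.

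For sufficiency, by Theorem 2.5 it is enough, in each case, to exhibit an idempotent $P\in comm^2(A)$ with $A^2-P\in M_2({\Bbb Z})^{qnil}$, that is, $(A^2-P)^2=0$. In the first case, the Cayley--Hamilton relation $A^2=tr(A)A-det(A)I_2$ together with $det(A)=0$ and $tr(A^2)=tr(A)^2=0$ forces $A^2=0$, so $P=0$ works. In the second case, $N:=I_2-A^2$ has zero trace and zero determinant, hence $N^2=0$ by Cayley--Hamilton, so $P=I_2$ works. In the third case, $\chi(A^2)(t)=t^2-t$, so $(A^2)^2=A^2$ by Cayley--Hamilton, and $P=A^2$ works. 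In every case $P$ lies in $comm^2(A)$ (trivially for $0$ and $I_2$, and because $A^2$ commutes with every matrix that commutes with $A$) and $A^2-P=0$, so Theorem 2.5 produces the generalized Hirano inverse.

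I do not expect a genuine obstacle here: the argument is essentially bookkeeping. The only point that needs attention is the dictionary between the matrix trichotomy delivered by Theorem 3.1 and the trace/determinant wording of the corollary, which rests on the formula for $det(I_2-A^2)$ and on using Cayley--Hamilton in both directions over ${\Bbb Z}$. If the corollary were instead phrased as ``$A^2=0$, or $(I_2-A^2)^2=0$, or $(A^2)^2=A^2$'', the translation step would disappear and both implications would follow at once from Theorems 3.1 and 2.5.
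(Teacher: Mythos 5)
Your proposal is correct and follows the paper's route exactly: necessity via Theorem 3.1 with $J({\Bbb Z})=0$ (plus Cayley--Hamilton to convert ``$\chi(A^2)$ has roots $0$ and $1$'' into $A^2=A^4$), and sufficiency via Theorem 2.5. The corollary's actual alternatives are precisely the matrix equations $A^2=0$, $(I_2-A^2)^2=0$, $A^2=A^4$ that you anticipate in your closing remark, so the trace/determinant translation step is unnecessary; the only slip is the summary claim that ``$A^2-P=0$'' in every case, which in Case 2 should read $(A^2-P)^2=0$, as you had in fact already verified.
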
 \vspace{-.5mm}
\begin{enumerate}
\item [(1)]{\it $A^2=0$, or}
\vspace{-.5mm}
\item [(2)]{\it $(I_2-A^2)^2=0$, or}
\vspace{-.5mm}
\item [(3)]{\it $A^2=A^4$.}\vspace{-.5mm}
\end{enumerate}\begin{proof}  $\Longleftarrow$ This is clear by Theorem 2.5.

$\Longrightarrow$ Since $J({\Bbb Z})=0$, by Theorem 3.1, we have

Case 1. $A^2=0$.

Case 2. $(I_2-A^2)^2=0$.

Case 3. $\chi (A^2)$ has a root $0$ and a root $1$. By Cayley-Hamilton
Theorem, $A^2(A^2-I_2)=0$; hence, $A^2=A^4$. In light of Theorem 2.5,
$A\in M_2({\Bbb Z})$ has generalized Hirano inverse.

Therefore we complete the proof.\end{proof}

\begin{exam} Let ${\Bbb
Z}_{(2)}=\{ \frac{m}{n}~|~ m,n\in {\Bbb Z}, (m,n)=1, 2\nmid n\}$, and let
$A=\left(
\begin{array}{cc}
1&2\\
3&4\end{array} \right)$. Then $A\in M_2({\Bbb Z}_{(2)})$ has no generalized Hirano inverse.\end{exam}
\begin{proof} Clearly, ${\Bbb
Z}_{(2)}$ is a commutative local ring with $J({\Bbb
Z}_{(2)})=2{\Bbb Z}_{(2)}$. Since $A^2=\left(
\begin{array}{cc}
7&10\\
15&22\end{array} \right)$, we see that $A^2, (I_2-A^2)^2\not\in M_2(J(R))$.
In addition,
$tr(A^2)=29$ and $det(A^2)=4$. Taking $\sigma: {\Bbb Z}_{(2)}\to
\mathbb{Q}$ to be the natural map, and $p(x)=x^2-29x+4\in {\Bbb
Z}_{(2)}[x]$. Clearly, $p^*(x)=x^2-29x+4\in \mathbb{Q}[x]$ is
irreducible, and so $x^2-29x+4=0$ is not solvable in ${\Bbb
Z}_{(2)}$. Hence, $x^2-tr(A^2)x+det(A^2)=0$ is not solvable in ${\Bbb Z}_{(2)}$.
In light of Theorem 3.1, $A\in M_2({\Bbb Z}_{(2)})$
has no generalized Hirano inverse.
\end{proof}

\begin{thm} Let $R$ be
a projective-free ring. Then $A\in M_2(R)$ has generalized Hirano inverse if and only if\end{thm}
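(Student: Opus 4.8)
The plan is to prove the two implications separately, using Theorem 3.1 for necessity and Theorem 2.5 for sufficiency; the remaining ingredients are only that $R$ is commutative (so $J(R)$ is a radical ideal: if $x^{2}\in J(R)$ then $x$ lies in every maximal ideal, hence $x\in J(R)$), the description $M_2(R)^{qnil}=\{M\in M_2(R)\mid M^{2}\in M_2(J(R))\}$ recalled at the start of the section, the Cayley--Hamilton relation $A^{4}=tr(A^{2})A^{2}-det(A^{2})I_2$ (hence $tr(A^{4})=tr(A^{2})^{2}-2\,det(A^{2})$), the identity $det(I_2-A^{2})=1-tr(A^{2})+det(A^{2})$, and $det(A^{2})=det(A)^{2}$.

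For necessity, suppose $A$ has generalized Hirano inverse and apply Theorem 3.1. If $A^{2}\in M_2(J(R))$, then $tr(A^{2})\in J(R)$ and $det(A)^{2}=det(A^{2})\in J(R)$, so $det(A)\in J(R)$ (first alternative). If $(I_2-A^{2})^{2}\in M_2(J(R))$, then the trace $2-2\,tr(A^{2})+tr(A^{2})^{2}-2\,det(A^{2})$ and the determinant $(1-tr(A^{2})+det(A^{2}))^{2}$ of $(I_2-A^{2})^{2}$ lie in $J(R)$; radicality gives $1-tr(A^{2})+det(A^{2})\in J(R)$, and feeding $tr(A^{2})\equiv 1+det(A^{2})\pmod{J(R)}$ into the trace condition yields $(1-det(A^{2}))^{2}\in J(R)$, so $det(A)^{2}=det(A^{2})\in 1+J(R)$ and then $tr(A^{2})\in 2+J(R)$ (second alternative). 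If $\chi(A^{2})$ has a root $\mu\in J(R)$ and a root $1+\lambda\in 1+J(R)$, then $tr(A^{2})=\mu+(1+\lambda)\in 1+J(R)$, $det(A)^{2}=det(A^{2})=\mu(1+\lambda)\in J(R)$ so $det(A)\in J(R)$, and $x^{2}-tr(A^{2})x+det^{2}(A)=\chi(A^{2})$ has the root $\mu$ (third alternative).

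For sufficiency I construct, in each alternative, an idempotent $p\in comm^{2}(A)$ with $A^{2}-p\in M_2(R)^{qnil}$ and invoke Theorem 2.5. In the first alternative, $A^{4}=tr(A^{2})A^{2}-det(A)^{2}I_2\in M_2(J(R))$, so $A^{2}\in M_2(R)^{qnil}$ and $p=0$ works. In the second, $(I_2-A^{2})^{2}=(1-det(A)^{2})I_2+(tr(A^{2})-2)A^{2}\in M_2(J(R))$, so $I_2-A^{2}\in M_2(R)^{qnil}$ and $p=I_2$ works. In the third, let $\mu\in J(R)$ be the root supplied by the hypothesis, put $\nu=tr(A^{2})-\mu\in 1+J(R)$; then $\mu+\nu=tr(A^{2})$, $\mu\nu=det(A^{2})$, and Cayley--Hamilton gives $(A^{2}-\mu I_2)(A^{2}-\nu I_2)=0$. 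Since $\nu-\mu=tr(A^{2})-2\mu\in 1+J(R)\subseteq U(R)$, set $p=(\nu-\mu)^{-1}(A^{2}-\mu I_2)$. From $(A^{2}-\mu I_2)^{2}=(A^{2}-\mu I_2)(A^{2}-\nu I_2)+(\nu-\mu)(A^{2}-\mu I_2)=(\nu-\mu)(A^{2}-\mu I_2)$ we get $p^{2}=p$, and $p$ is an $R$-combination of $I_2$ and $A^{2}$, both in $comm^{2}(A)$, so $p\in comm^{2}(A)$. Writing $A^{2}=(\nu-\mu)p+\mu I_2$, hence $A^{2}-p=(\nu-\mu-1)p+\mu I_2$, we obtain $(A^{2}-p)^{2}=(\nu-\mu-1)(\nu+\mu-1)p+\mu^{2}I_2\in M_2(J(R))$, because $\nu+\mu-1=tr(A^{2})-1\in J(R)$ and $\mu^{2}\in J(R)$. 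Thus $A^{2}-p\in M_2(R)^{qnil}$, and Theorem 2.5 completes the proof.

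The main obstacle is the third alternative of the sufficiency: one has to read off the correct idempotent from the polynomial data, and the construction genuinely needs the root to lie in $J(R)$, not merely in $R$. Indeed, modulo $J(R)$ one has $\chi(A^{2})\equiv x(x-1)$, so every root of $\chi(A^{2})$ becomes idempotent mod $J(R)$; it is precisely a root $\mu\in J(R)$ whose companion $\nu=tr(A^{2})-\mu$ then falls in $1+J(R)$, making $\nu-\mu$ a unit and turning $p=(\nu-\mu)^{-1}(A^{2}-\mu I_2)$ into a true idempotent with $A^{2}-p$ quasinilpotent. Everything else is routine $2\times 2$ trace and determinant bookkeeping together with the radicality of $J(R)$.
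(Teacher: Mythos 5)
Your overall architecture matches the paper's: necessity via Theorem 3.1, sufficiency via Theorem 2.5, with commutativity of $R$ supplying radicality of $J(R)$. In the two delicate spots, however, you take genuinely different routes, and both are improvements. For necessity in the second alternative, the paper reduces $(I_2-A^2)^2\in M_2(J(R))$ to $tr(I_2-A^2),\, det(I_2-A^2)\in J(R)$ by citing an (unresolved) reference about characteristic polynomials of nilpotent matrices over $R/J(R)$; your direct computation of the trace and determinant of $(I_2-A^2)^2$, followed by two applications of radicality, replaces that citation with elementary bookkeeping. For sufficiency in the third alternative, the paper first claims that $tr(A^2)\in U(R)$ forces a diagonal entry of $A^2$ to be a unit (false in a general commutative ring) and then builds an explicit $U\in GL_2(R)$ conjugating $A^2$ to $\mathrm{diag}(x_1,x_2)$; your $p=(\nu-\mu)^{-1}(A^2-\mu I_2)$, extracted from the Cayley--Hamilton factorization $(A^2-\mu I_2)(A^2-\nu I_2)=0$, produces the spectral idempotent as a polynomial in $A^2$, which makes $p\in comm^2(A)$ automatic and sidesteps the diagonalization entirely. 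All of your trace/determinant computations check out.

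The one step you should not gloss over is ``let $\mu\in J(R)$ be the root supplied by the hypothesis.'' Condition (3) only asserts that $x^2-tr(A^2)x+det^2(A)=0$ has a root in $R$, and your construction, as you yourself note, needs a root in $J(R)$ (so that $\mu^2\in J(R)$ and $\nu-\mu\in 1+J(R)\subseteq U(R)$). The paper bridges this by asserting that $x_1x_2=det(A^2)\in J(R)$ forces $x_1\in J(R)$ or $x_2\in J(R)$; that deduction is valid when $J(R)$ is prime (e.g.\ $R$ local) or when $R/J(R)$ has only trivial idempotents, but not for every projective-free ring: in the ring $R$ of rationals with denominator prime to $6$ one has $J(R)=6R$, $4\cdot 9=36\in J(R)$ and $4+9\in U(R)$ with neither factor in $J(R)$, and modulo $J(R)$ a root of $\chi(A^2)$ is merely an idempotent of $R/J(R)$, which need not be $0$ or $1$ there. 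So either supply an argument that under the stated hypotheses a root can always be chosen in $J(R)$, or record explicitly that you read ``solvable'' as ``solvable with a root in $J(R)$.'' As it stands this is the same gap the paper's own Case 3 has; you have merely localized it more honestly.
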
 \vspace{-.5mm}
\begin{enumerate}
\item [(1)]{\it $det(A), tr(A^2)\in J(R)$; or}
\vspace{-.5mm}
\item [(2)]{\it $det^2(A)\in 1+J(R), tr(A^2)\in 2+J(R)$; or }
\vspace{-.5mm}
\item [(3)]{\it $det(A)\in J(R), tr(A^2)\in 1+J(R)$, and $x^2-tr(A^2)x+det^2(A)=0$ is solvable.}
\end{enumerate}
\begin{proof} $\Longleftarrow$ Case 1. $det(A), tr(A^2)\in J(R)$. By Cayley-Hamilton Theorem, we have $A^2-tr(A^2)A^2+det(A^2)I_2=0$; hence, $A^2=tr(A^2)A^2-det(A^2)I_2\in J(M_2(R))$. Thus, $A^4\in M_2(J(R))$, and so
$A^2\in M_2(R)^{qnil}$. Thus, $A$ has generalized Hirano inverse, by Theorem 2.5.

Case 2. $det^2(A)\in 1+J(R), tr(A^2)\in 2+J(R)$. Then $tr(I_2-A^2)=2-tr(A^2)\in J(A)$ and $det(I_2-A^2)=1-tr(A^2)+det(A^2)\in J(R)$. As in Case 1,
$I_2-A^2\in M_2(R)^{qnil}$. In light of Theorem 2.5, $A$ has generalized Hirano inverse.

Case 3. $det(A)\in J(R), tr(A^2)\in 1+J(R)$ and $x^2-tr(A^2)x+det(A^2)=0$ is solvable.
Write $A^2=\left(
\begin{array}{cc}
a&b\\
c&d
\end{array}
\right)$. Then $a+d=tr(A^2)\in U(R)$. Hence, $a$ or $d$ are invertible. Say $a\in U(R)$.
Suppose $x^2-tr(A^2)x+det(A^2)=0$ is solvable. Say $x_1^2-tr(A^2)x_1+det(A^2)=0$. Set $x_2=tr(A^2)-x_1$. Then $x_2^2-tr(A^2)x_2+det(A^2)=0$. As $x_1x_2=det(A^2)\in J(R)$, we see that $x_1\in J(R), x_2\in U(R)$ or $x_1\in U(R), x_2\in J(R)$. Say $x_1\in J(R),x_2\in U(R)$. Then $x_2^2=tr^2(A^2)+(x_1-2tr(A^2))x_1\in 1+J(R)$.
Let $U=\left(
\begin{array}{cc}
b&a-x_1\\
x_1-a&c
\end{array}
\right)$. It follows from $det(U)=atr(A^2)+(x_1^2-2ax_1-det(A^2))\in U(R)$ that $U\in GL_2(R)$. One easily checks that $$U^{-1}A^2U=\left(
\begin{array}{cc}
x_1&\\
&x_2
\end{array}
\right),$$ and so
$$U^{-1}A^2U=\left(
\begin{array}{cc}
x_1&\\
&x_2
\end{array}
\right)
=\left(
\begin{array}{cc}
0&\\
&1
\end{array}
\right)+\left(
\begin{array}{cc}
x_1&\\
&x_2-1
\end{array}
\right).$$ If $C=(c_{ij})\in comm\left(
\begin{array}{cc}
x_1&\\
&x_2
\end{array}
\right)$, then $c_{12}(x_1-x_2)=c_{21}(x_1-x_2)=0$. As
 $x_1-x_2\in U(R)$, we get $c_{12}=c_{21}=0$. Therefore $C\left(
\begin{array}{cc}
0&\\
&1
\end{array}
\right)=\left(
\begin{array}{cc}
0&\\
&1
\end{array}
\right)C$. This shows that $\left(
\begin{array}{cc}
0&\\
&1
\end{array}
\right)\in comm^2\left(
\begin{array}{cc}
x_1&\\
&x_2
\end{array}
\right)$. In light of Theorem 2.5, $A\in M_2(R)$ has generalized Hirano inverse.

$\Longrightarrow$ In view of Theorem 3.1,
$A^2\in M_2(J(R))$; or $(I_2-A^2)^2\in M_2(J(R))$; or $\chi (A^2)$ has a root in $J(R)$ and a root in $1+J(R)$.

Case 1. $A^2\in M_2(J(R))$. Hence, $det(A), tr(A^2)\in J(R)$.

Case 2. $(I_2-A^2)^2\in M_2(J(R))$. Then $\overline{I_2-A^2}\in N(M_2(R/J(R))$.
In light of~\cite[???]{???}, $\chi (\overline{I_2-A^2})\equiv t^2 \big(mod~ N(R/J(R)\big)$. It follows from $N(R/J(R))$ $=$ $\overline{0}$ that
$\chi(I_2-A^2)-t^2\in J(R)[t]$. As $\chi (I_2-A^2)=t^2-tr(I_2-A^2)t+det(I_2-A^2)$, we have
$tr(I_2-A^2), det(I_2-A^2)\in J(R)$. Hence, $tr(A^2)\in 2+J(R)$. Since $det(I_2-A^2)=1-tr(A^2)+det^2(A)$, we get
$det^2(A)\in 1+J(R),$ as desired.

Case 3. $\chi (A^2)$ has a root $\alpha$ in $J(R)$ and a root $\beta$ in $1+J(R)$. Since $\chi (A^2)=x^2-tr(A^2)x+det^2(A)$, we see that $det^2(A)=\alpha\beta\in J(R)$, and so $det(A)\in J(R)$. Moreover, $tr(A^2)=\alpha+\beta\in 1+J(R)$. Therefore we complete the proof.\end{proof}

\begin{cor}  Let $R$ be a projective-free ring, and let $A\in M_2(R)$. If
$\chi(A^2)=(t-\alpha)(t-\beta)$, where
$\alpha,\beta\in J(R)\bigcup (1+J(R))$, then $A$ has generalized Hirano inverse.\end{cor}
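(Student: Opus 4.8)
The plan is to reduce the statement to Theorem 3.4 by a short case analysis, the only point requiring care being an elementary remark about $J(R)$. Writing $\chi(A^2)=t^2-tr(A^2)t+det(A^2)$ and using $det(A^2)=det^2(A)$, comparison with $(t-\alpha)(t-\beta)$ gives $tr(A^2)=\alpha+\beta$ and $det^2(A)=\alpha\beta$. I would also record that, $R$ being commutative, the quotient $R/J(R)$ is reduced: its nilradical lies in its Jacobson radical, which is $0$. Hence, for every $x\in R$, $x^2\in J(R)$ forces $x\in J(R)$; this is what lets one pass from $det^2(A)\in J(R)$ to $det(A)\in J(R)$.

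Next I would split into three cases according to where $\alpha$ and $\beta$ lie in $J(R)\cup(1+J(R))$. If $\alpha,\beta\in J(R)$, then $tr(A^2)=\alpha+\beta\in J(R)$ and $det^2(A)=\alpha\beta\in J(R)$, so $det(A)\in J(R)$, which is condition (1) of Theorem 3.4. If $\alpha,\beta\in 1+J(R)$, then $tr(A^2)=\alpha+\beta\in 2+J(R)$ and $det^2(A)=\alpha\beta\in 1+J(R)$, which is condition (2). Finally, if one of them --- say $\alpha$ --- lies in $J(R)$ and the other, $\beta$, in $1+J(R)$, the remaining combination being symmetric in $\alpha$ and $\beta$, then $tr(A^2)=\alpha+\beta\in 1+J(R)$, and $det^2(A)=\alpha\beta\in J(R)$ so $det(A)\in J(R)$; moreover $x^2-tr(A^2)x+det^2(A)=(x-\alpha)(x-\beta)$ vanishes at $x=\alpha$, so this equation is solvable. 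This is condition (3). In each case Theorem 3.4 yields that $A$ has generalized Hirano inverse.

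I do not expect a genuine obstacle: once the reducedness of $R/J(R)$ is in hand, everything else is a mechanical check against the three conditions of Theorem 3.4. Alternatively, one could bypass Theorem 3.4 and invoke Theorem 2.5 directly. In the first two cases the Cayley-Hamilton identity for $A^2$, respectively for $I_2-A^2$, together with $M_2(R)^{qnil}=\{B~|~B^2\in M_2(J(R))\}$, shows that $A^2$, respectively $I_2-A^2$, is quasinilpotent, so one may take $p=0$, respectively $p=I_2$, in Theorem 2.5; in the mixed case one repeats the diagonalization of $A^2$ from the proof of Theorem 3.4 with $x_1=\alpha$ and $x_2=\beta$. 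The route through Theorem 3.4 is the shortest.
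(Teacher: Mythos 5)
Your proof is correct and follows essentially the same route as the paper: read off $tr(A^2)=\alpha+\beta$ and $det^2(A)=\alpha\beta$ and check the three conditions of Theorem 3.4 case by case. You in fact supply two small details the paper leaves implicit --- that $det^2(A)\in J(R)$ forces $det(A)\in J(R)$ because $R/J(R)$ is reduced, and that in the mixed case the quadratic $x^2-tr(A^2)x+det^2(A)=0$ is solvable with root $\alpha$ --- so nothing further is needed.
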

\begin{proof} Since $\chi(A^2)=(t-\alpha)(t-\beta)$, $det(A^2)=\alpha\beta$ and $tr(A^2)=\alpha+\beta$.

Case 1. $\alpha,\beta\in J(R).$ Then $det(A), tr(A^2)\in J(R)$.

Case 2. $\alpha,\beta\in 1+J(R).$ Then $det^2(A)\in 1+J(R), tr(A^2)\in 2+J(R)$.

Case 3. $\alpha\in J(R), \beta\in 1+J(R)$. Then $det(A)\in J(R), tr(A^2)\in 1+J(R)$.

Case 4. $\alpha\in 1+J(R),\beta\in J(R).$  Then $det(A)\in J(R), tr(A^2)\in 1+J(R)$.

Therefore $A$ has generalized Hirano inverse, by Theorem 3.4.\end{proof}

\begin{cor} Let $R$ be
a projective-free ring. If $\frac{1}{2}\in R$, then $A\in M_2(R)$ has Hrano inverse if and only if\end{cor}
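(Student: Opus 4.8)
The plan is to specialize Theorem 3.4 to the case $\frac{1}{2}\in R$, which forces $2\in U(R)$, and see which of the three conditions survive. First I would observe that if $2$ is a unit, then condition (2) of Theorem 3.4, namely $tr(A^2)\in 2+J(R)$, is simply the statement that $tr(A^2)$ lies in the coset $2+J(R)$; since $2\notin J(R)$ (as $2$ is a unit), this is a genuine nontrivial condition, but it can be rewritten. Actually the cleaner reformulation: when $\frac12\in R$, I would try to merge cases (2) and (3) of Theorem 3.4 into a single solvability statement, or else restate each condition in terms of $det(A)$ and $tr(A^2)$ modulo $J(R)$ and the solvability of a single quadratic.

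Concretely, the key step is this: under the hypothesis $\frac12\in R$, conditions (2) and (3) of Theorem 3.4 can both be phrased as ``$det(A)^2\in 1+J(R)$ or ($det(A)\in J(R)$ and $tr(A^2)\in 1+J(R)$), together with solvability of $x^2-tr(A^2)x+det(A^2)=0$.'' Indeed, in case (2) one has $tr(A^2)\equiv 2$, $det(A^2)\equiv 1 \pmod{J(R)}$, and then $x^2-2x+1=(x-1)^2$ has the root $x=1$, which lifts to a solution in $R$ of the perturbed quadratic because $R/J(R)$-liftability of a simple-enough root... but here the root is a double root, so I must be careful. The safe route is to check directly that with $\frac12\in R$ available, case (2) can be absorbed: write $x_1 = \frac{tr(A^2)}{2} + \frac{1}{2}\sqrt{tr(A^2)^2 - 4det(A^2)}$ provided the discriminant $tr(A^2)^2-4det(A^2)$ is a square; in case (2) that discriminant lies in $J(R)$, and I would need $\sqrt{\phantom{x}}$ of a $J(R)$-element to exist — which it does not automatically. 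So instead I expect the corollary to keep the three cases but simplify the arithmetic: e.g. (1) $det(A),tr(A^2)\in J(R)$; (2) $det(A^2)\in 1+J(R)$ and $tr(A^2)\in 2+J(R)$; (3) $det(A)\in J(R)$, $tr(A^2)\in 1+J(R)$ and the quadratic is solvable — the only genuine simplification being that $det^2(A)\in 1+J(R)$ becomes $det(A^2)\in 1+J(R)$ and perhaps that one may replace ``$det^2(A)$'' by ``$det(A)$'' in a sign-normalized way, since $2\in U(R)$ lets one distinguish $\pm 1$ cosets.

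So the steps in order: (a) invoke Theorem 3.4 to get the three conditions; (b) use $2\in U(R)$ to note $J(R)$, $1+J(R)$, $2+J(R)$ are pairwise disjoint cosets and that $-1\neq 1$ modulo $J(R)$, so that $det^2(A)\in 1+J(R)$ splits as $det(A)\in 1+J(R)$ or $det(A)\in -1+J(R)$; (c) rewrite each of the three cases accordingly and check the reverse implication by the same computations as in Theorem 3.4, which go through verbatim since they never used $2\notin U(R)$. The main obstacle I anticipate is handling case (2): the perturbed quadratic $x^2 - tr(A^2)x + det(A^2)=0$ with $tr(A^2)\equiv 2$, $det(A^2)\equiv 1$ has discriminant in $J(R)$, so without a square root of that discriminant one cannot produce the diagonalizing $U$ — this is exactly why Theorem 3.4 treats (2) separately rather than folding it into (3), and the corollary should do the same; the role of $\frac12\in R$ is therefore cosmetic (cleaning up $det^2$ versus $det$ and the coset bookkeeping) rather than structural, and I would state the corollary with the three parallel cases and prove it in one line by citing Theorem 3.4 together with the observation that $\frac12\in R$ makes the listed conditions equivalent to those of Theorem 3.4.
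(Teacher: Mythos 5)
The statement you were given was truncated: the three conditions of this corollary are (1) $det(A), tr(A^2)\in J(R)$; (2) $det^2(A)\in 1+J(R)$, $tr(A^2)\in 2+J(R)$; (3) $det(A)\in J(R)$, $tr(A^2)\in 1+J(R)$ and \emph{there exists $u\in U(R)$ with $tr^2(A^2)-4det^2(A)=u^2$}. So the entire content of the corollary is the equivalence, under the hypotheses $\frac12\in R$, $det(A)\in J(R)$, $tr(A^2)\in 1+J(R)$, between solvability of $x^2-tr(A^2)x+det^2(A)=0$ (Theorem 3.4(3)) and the discriminant being the square of a unit; conditions (1) and (2) are quoted from Theorem 3.4 unchanged. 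Your proposal concludes that condition (3) should simply remain a solvability statement and that ``the role of $\frac12\in R$ is cosmetic rather than structural,'' so it never proves the equivalence the corollary actually asserts. That is the gap: $\frac12$ is used structurally, precisely to run the quadratic formula.

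You do have half of the needed argument: if $tr^2(A^2)-4det^2(A)=u^2$ then $\frac{tr(A^2)+u}{2}$ is a root, which is the paper's converse direction and the only place $\frac12$ enters. What is missing is the forward direction: if $x_1$ is a root and $x_2=tr(A^2)-x_1$, then $x_1x_2=det(A^2)\in J(R)$ while $x_1+x_2=tr(A^2)\in 1+J(R)\subseteq U(R)$, so one of $x_1,x_2$ is a unit and the other lies in $J(R)$, whence $u=x_1-x_2\in U(R)$ and $tr^2(A^2)-4det^2(A)=(x_1-x_2)^2=u^2$. The point is to obtain $u$ as a \emph{unit}, not merely that the discriminant is a square, and this is exactly where $det(A)\in J(R)$ and $tr(A^2)\in 1+J(R)$ are used. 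Two smaller remarks: your concern about absorbing case (2) into case (3) is moot, since the corollary does not attempt this and its case (2) discriminant lies in $J(R)$, hence is never $u^2$ for a unit $u$; and your step (b), splitting $det^2(A)\in 1+J(R)$ into $det(A)\in 1+J(R)$ or $det(A)\in -1+J(R)$, is neither needed nor valid in general, since $R/J(R)$ can have square roots of $1$ other than $\pm1$ even for a projective-free $R$ containing $\frac12$ (e.g.\ the semilocal PID $\{m/n\in{\Bbb Q}: \gcd(n,15)=1\}$, whose residue ring is ${\Bbb Z}/15{\Bbb Z}$).
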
 \vspace{-.5mm}
\begin{enumerate}
\item [(1)]{\it $det(A), tr(A^2)\in J(R)$; or}
\vspace{-.5mm}
\item [(2)]{\it $det^2(A)\in 1+J(R), tr(A^2)\in 2+J(R)$; or }
\vspace{-.5mm}
\item [(3)]{\it $det(A)\in J(R), tr(A^2)\in 1+J(R)$ and there exists $u\in U(R)$ such that $tr^2(A^2)-4det^2(A)=u^2$.}
\end{enumerate}
\begin{proof} Suppose that $det(A)\in J(R), tr(A^2)\in 1+J(R)$. If $x^2-tr(A^2)x+det^2(A)=0$ is solvable, then this equation has a root $x_1$. Set $x_2=tr(A^2)-x_1$. Then $det(A^2)=x_1(tr(A^2)-x_1)=x_1x_2\in J(R)$. Set $u=x_1-x_2$. Then $u\in U(R)$. Therefore $tr^2(A^2)-4det^2(A)=(x_1+x_2)^2-4x_1x_2=(x_1-x_2)^2=u^2$. Conversely, assume that $tr^2(A^2)-4det^2(A)=u^2$ for a $u\in U(R)$. Then $\frac{tr(A^2)+u}{2}$ is a root of $x^2-tr(A^2)x+det^2(A)=0$. In light of Theorem 3.4, we complete the proof.\end{proof}

\begin{exam} Let ${\Bbb
Z}_{(2)}=\{ \frac{m}{n}~|~ m,n\in {\Bbb Z}, (m,n)=1, 2\nmid n\}$, and let
$A=\left(
\begin{array}{cc}
5&6\\
3&2\end{array} \right)\in M_2({\Bbb Z}_{(2)})$. Then $A\in M_2({\Bbb Z}_{(2)})$ has generalized Hirano inverse.\end{exam}
\begin{proof} Clearly, ${\Bbb
Z}_{(2)}$ is a commutative local ring with $J({\Bbb
Z}_{(2)})=2{\Bbb Z}_{(2)}$. Clearly, $A^2=\left(
\begin{array}{cc}
43&42\\
21&22\end{array} \right)$. Then $det(A^2)=64\in J({\Bbb Z}_{(2)}),$ $tr(A^2)=65\in 1+J({\Bbb Z}_{(2)})$. Thus, $det(A)\in J({\Bbb Z}_{(2)})$, and that $x^2-tr(A^2)x+det^2(A)=0$ has a root $1$. Therefore we are done by Theorem 3.4.
\end{proof}

\section{Cline's formula}

In ~\cite[Theorem 2.1]{LCC2}, Liao et al. proved Cline's formula for generalized Drazin inverse. Lian and Zeng extended this result and proved that if $aba=aca$ then $ac$ has generalized Drazin inverse if and only if $ba$ has generalized Drazin inverse (see~\cite[Theorem 2.3]{LZ}). The aim of this section is to generalize Cline¡¯s formula from generalized Drazin inverses to generalized Hirano inverses.

\begin{thm} Let $R$ be a ring, and let $a,b,c\in R$. If $aba=aca$, Then $ac$ has generalized Hirano inverse if and only if $ba$ has generalized Hirano inverse. In this case, $(ba)^{h}=b((ac)^{h})^2a$.
\end{thm}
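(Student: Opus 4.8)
The plan is to verify directly that $\beta:=b\big((ac)^{h}\big)^{2}a$ is the generalized Hirano inverse of $ba$, and then to bootstrap the full equivalence from this single implication by exploiting the $b\leftrightarrow c$ symmetry of the hypothesis $aba=aca$. Set $d=(ac)^{h}$; by Corollary 2.3, $d=(ac)^{d}$, so $d(ac)=(ac)d$, $d(ac)d=d$, hence $d=(ac)d^{2}=d^{2}(ac)$ and $p:=(ac)d$ is an idempotent. The engine of the computation is the pair of identities forced by $aba=aca$: multiplying on the right by $c$ gives $(ab)(ac)=(ac)^{2}$, and multiplying on the right by $b$ gives $(ab)^{2}=(ac)(ab)$. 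From these, $(ab)d=(ab)(ac)d^{2}=(ac)^{2}d^{2}=(ac)d=p$, and therefore $(ab)d^{2}=(ac)d^{2}=d$. Note that $d$ is \emph{not} assumed to commute with $ab$, so every step must be reducible to $d(ac)=(ac)d$ and $d(ac)d=d$ alone.

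First I would check the reflexivity relation. Expanding and using the identities above, $\beta(ba)\beta=bd^{2}(ab)^{2}d^{2}a$, where $(ab)^{2}d^{2}=(ac)(ab)d^{2}=(ac)d=p$ and $d^{2}p=d^{2}(ac)d=d^{2}$, so $\beta(ba)\beta=bd^{2}a=\beta$. For the membership $\beta\in comm^{2}(ba)$ I would invoke Lian and Zeng's Cline formula for the generalized Drazin inverse (\cite[Theorem 2.3]{LZ}): since $aba=aca$ and $ac$ has generalized Drazin inverse $d$, it provides that $ba$ has generalized Drazin inverse $bd^{2}a=\beta$, and the definition of the generalized Drazin inverse then forces $\beta\in comm^{2}(ba)$. (A direct proof of this membership is noticeably harder here than in the classical case, because the auxiliary element $atb$ that works for $ab,ba$ need not commute with $ac$; hence leaning on \cite{LZ} is the clean route.)

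The heart of the argument is the quasinilpotent condition $(ba)^{2}-(ba)\beta\in R^{qnil}$. Since $(ba)\beta=b(ab)d^{2}a=bda$, this element equals $b(ab)a-bda=b\big((ab)-d\big)a$. I would then apply Jacobson's lemma for quasinilpotent elements twice: $b\big((ab)-d\big)a\in R^{qnil}$ iff $\big((ab)-d\big)(ab)\in R^{qnil}$; using $(ab)^{2}=(ac)(ab)$ the latter equals $\big((ac)-d\big)(ab)$, which lies in $R^{qnil}$ iff $(ab)\big((ac)-d\big)\in R^{qnil}$; and finally $(ab)\big((ac)-d\big)=(ab)(ac)-(ab)d=(ac)^{2}-(ac)d=(ac)^{2}-(ac)(ac)^{h}$, which lies in $R^{qnil}$ because $ac$ has generalized Hirano inverse. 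This shows $ba$ has generalized Hirano inverse $\beta=b\big((ac)^{h}\big)^{2}a$, which also yields the asserted formula.

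For the converse I would not redo the computation with reversed roles but instead chain implications. The special case $c=b$ of the implication just proved (plus the same with $a,b$ interchanged) gives that $xy$ has generalized Hirano inverse iff $yx$ does; and the implication applied to the triple $(a,c,b)$, whose hypothesis $aca=aba$ holds, shows that if $ab$ has generalized Hirano inverse then so does $ca$. Combining $ba\leftrightarrow ab\Rightarrow ca\leftrightarrow ac$ supplies the missing direction. The main obstacle I foresee is the quasinilpotent transfer in the third paragraph: one must have Jacobson's lemma available for $R^{qnil}$ (or prove a preliminary Cline-type lemma for quasinilpotents under $aba=aca$), and one must be scrupulous that each identity used — $(ab)d=p$, $(ab)d^{2}=d$, $d^{2}p=d^{2}$, $(ab)(ac)=(ac)^{2}$, $(ab)^{2}=(ac)(ab)$ — really is a consequence of $d(ac)=(ac)d$ and $d(ac)d=d$ and the relation $aba=aca$, with no hidden use of a commutation $d(ab)=(ab)d$.
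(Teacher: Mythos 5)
Your verification is correct, and it reaches the paper's conclusion by a partly different route, so it is worth comparing the two. Both proofs work with the same candidate $\beta=b((ac)^{h})^{2}a$ and both ultimately rest on the quasinilpotent version of Cline's/Jacobson's lemma from \cite[Lemma 2.2]{LZ}. Where you diverge is in the treatment of the double-commutant condition: the paper verifies $\beta\in comm^{2}(ba)$ by a direct (and rather delicate) computation with an arbitrary $f\in comm(ba)$, pushing $f$ through products of $a,b,c,d$ using $aba=aca$ and $d\in comm^{2}(ac)$; you instead outsource this to \cite[Theorem 2.3]{LZ} via the paper's Corollary 2.3 ($a^{h}=a^{d}$ when the former exists), which hands you both $\beta\in comm^{2}(ba)$ and $\beta(ba)\beta=\beta$ for free and leaves only the single extra condition $(ba)^{2}-(ba)\beta\in R^{qnil}$ to check --- a genuine economy, at the price of depending on the external generalized Drazin result rather than being self-contained. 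For that remaining condition the paper factors $(ba)^{2}-(ba)\beta=bpa$ with $p=ac-acd^{2}$, checks $(pa)b(pa)=(pa)c(pa)$ and $pac\in R^{qnil}$, and applies \cite[Lemma 2.2]{LZ} once to the triple $(pa,b,c)$; your two successive applications of the plain $xy\leftrightarrow yx$ transfer, driven by the identities $(ab)(ac)=(ac)^{2}$ and $(ab)^{2}=(ac)(ab)$, accomplish the same thing and are arguably more transparent (note that the plain transfer is exactly the case $c=b$ of that lemma, so nothing new is needed). Your identities $(ab)d=(ac)d$ and $(ab)d^{2}=d$ do follow from $d(ac)=(ac)d$, $d(ac)d=d$ and $aba=aca$ alone, as you insist they must. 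Finally, your chaining argument for the converse direction ($ba\Rightarrow ab\Rightarrow ca\Rightarrow ac$, using the case $c=b$ and the permuted triple $(a,c,b)$) supplies a step the paper's proof leaves implicit, since the hypothesis $aba=aca$ is not symmetric in the way the one-directional argument would need.
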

\begin{proof} Suppose that $ac$ has generalized Hirano inverse and $(ac)^{h}=d$. Let $e=bd^2a$ and $f\in comm(ba)$. Then $$fe=fb(dacd)^2a=fbacacd^4a=babafcd^4a=b(acafc)d^4a.$$
Also we have
 $$\begin{array}{lll}
 ac(acafc)&=&ababafc=afbabac=afbacac\\
 &=&abafcac=(acafc)ac.
 \end{array}$$
 As $d\in comm^2(ac)$, we see that $(acafc)d=d(acafc)$. Thus, we conclude that
 $$\begin{array}{lll}
 fe&=&b(acafc)d^4a=bd^4(acafc)a\\
 &=&bd^4abafca=bd^4afbaca\\
 &=&bd^4afbaba=bd^4ababaf\\
 &=&bd^4acacaf=bd^2af=ef.
 \end{array}$$
 This implies that $e\in comm^2(ba)$.
 We have
 $$\begin{array}{lll}
 e(ba)e&=&bd^2a(ba)bd^2a=bd^2ababacd^3a\\
 &=&bd^2(ac)^3d^3a=bd^2a=e.
 \end{array}$$
 Let $p=ac-acd^2$ then,
  $$pac=acac-acd^2ac=acac-dac=(ac)^2-acd$$
  that is contained in $R^{qnil}$. Also we have
  $$\begin{array}{lll}
  (ba)^2-bae&=&baba-babd^2a=baba-babacd^2da\\
  &=&baca-bacacd^2da=b(ac-acd^2)a=bpa.
  \end{array}$$
  Since
  $$abpa=ab(ac-acd^2)a=ac(ac-acd^2)a=acpa,$$
  we deduce that
  $$(pa)b(pa)=(pa)c(pa).$$
  Then by \cite[Lemma 2.2] {LZ}, $bpa\in R^{qnil}$. Hence $e$ is generalized Hirano inverse of $ba$ and as the generalized Hirano inverse is unique we deduce that $e=bd^2a=(ba)^{h}.$
  \end{proof}

\begin{cor} Let $R$ be a ring and $a,b\in R$. If $ab$ has generalized Hirano inverse, then so has $ba$, and $(ba)^{h} = b((ab)^{h})^2a.$\end{cor}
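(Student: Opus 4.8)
The plan is to obtain Corollary 4.2 as the special case $c = b$ of Theorem 4.1. First I would observe that the hypothesis of Theorem 4.1 requires $aba = aca$; with $c = b$ this reads $aba = aba$, which holds trivially, so Theorem 4.1 applies to the triple $(a, b, b)$. Then the conclusion of Theorem 4.1 states that $ac = ab$ has generalized Hirano inverse if and only if $ba$ has generalized Hirano inverse, and in that case $(ba)^{h} = b((ac)^{h})^2 a = b((ab)^{h})^2 a$. This is exactly the assertion of Corollary 4.2, so the proof is a single line: apply Theorem 4.1 with $c = b$.

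There is essentially no obstacle here, since the corollary is a direct specialization. The only thing to be careful about is that Corollary 4.2 is stated as a one-directional implication (``if $ab$ has generalized Hirano inverse, then so has $ba$''), whereas Theorem 4.1 gives an equivalence; naturally the one-directional form is weaker and follows immediately, and the formula $(ba)^{h} = b((ab)^{h})^2 a$ transcribes verbatim from the theorem's formula $(ba)^{h} = b((ac)^{h})^2 a$ upon setting $c = b$. So the write-up is simply:

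\begin{proof} Taking $c = b$ in Theorem 4.1, the identity $aba = aca$ holds trivially, and hence $ba$ has generalized Hirano inverse whenever $ab$ does, with $(ba)^{h} = b((ab)^{h})^2 a$. \end{proof}

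If one wished to make the statement symmetric (an ``if and only if''), one could note that interchanging the roles of $a$ and $b$ and reapplying the same specialization of Theorem 4.1 yields the reverse implication as well; but as stated the corollary needs only the forward direction, so no extra argument is required.
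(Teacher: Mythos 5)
Your proposal is correct and is exactly the paper's own argument: the paper's proof of this corollary reads ``This follows directly from Theorem 4.1 as $aba=aba$,'' i.e., the specialization $c=b$. Nothing further is needed.
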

\begin{proof} This follows directly from Theorem 4.1 as $aba=aba.$\end{proof}

\begin{cor} Let $R$ be a ring, and let $a,b\in R$. If $(ab)^k$ has generalized Hirano inverse, then so is $(ba)^k$.
\end{cor}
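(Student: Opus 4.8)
The plan is to reduce the statement for $(ab)^k$ to the already-established Cline's formula in Corollary 4.2 by a clever factorization. The key observation is that $(ab)^k$ and $(ba)^k$ can each be written as a product of two elements in a way that is ``transposed'' in the sense of Corollary 4.2. Write $(ab)^k = a\,(ba)^{k-1}b$ and $(ba)^k = (ba)^{k-1}b\cdot a$; setting $u = a$ and $v = (ba)^{k-1}b$, we have $uv = a(ba)^{k-1}b = (ab)^k$ and $vu = (ba)^{k-1}ba = (ba)^k$. Thus the claim is literally an instance of Corollary 4.2 applied to the pair $u,v$: if $uv = (ab)^k$ has generalized Hirano inverse, then so does $vu = (ba)^k$.

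Concretely, the steps are: first, verify the two factorization identities $(ab)^k = a\bigl((ba)^{k-1}b\bigr)$ and $(ba)^k = \bigl((ba)^{k-1}b\bigr)a$, which is an immediate consequence of associativity and the fact that $(ab)^k = a b a b \cdots a b$ (with $k$ copies of $ab$) equals $a$ times $(ba)^{k-1}b = babab\cdots ab$ (with $k-1$ copies of $ba$ followed by a trailing $b$); for $k=1$ this reads $ab = a\cdot b$ and $ba = b\cdot a$, recovering Corollary 4.2 itself. Second, apply Corollary 4.2 with $a$ replaced by $u := a$ and $b$ replaced by $v := (ba)^{k-1}b$: since $uv = (ab)^k$ has generalized Hirano inverse by hypothesis, Corollary 4.2 yields that $vu = (ba)^k$ has generalized Hirano inverse, and moreover $\bigl((ba)^k\bigr)^h = v\bigl((uv)^h\bigr)^2 u = (ba)^{k-1}b\,\bigl(((ab)^k)^h\bigr)^2\,a$.

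I do not anticipate a genuine obstacle here; the only point requiring mild care is bookkeeping the factorization when $k=1$ (so that $(ba)^{k-1}b = b$), but this degenerates correctly. One could also note the symmetry: exchanging the roles of $a$ and $b$ shows the converse implication as well, so in fact $(ab)^k$ has generalized Hirano inverse if and only if $(ba)^k$ does, though the corollary as stated only claims one direction. The formula for $\bigl((ba)^k\bigr)^h$ in terms of $\bigl((ab)^k\bigr)^h$ comes for free from the formula in Corollary 4.2.
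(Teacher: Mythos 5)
Your proposal is correct and is essentially the paper's own argument: the paper factors $(ab)^k=a\bigl(b(ab)^{k-1}\bigr)$ and applies Corollary 4.2, and your factor $v=(ba)^{k-1}b$ is literally the same element as $b(ab)^{k-1}$, so the two proofs coincide. The explicit formula $\bigl((ba)^k\bigr)^h=(ba)^{k-1}b\bigl(((ab)^k)^h\bigr)^2a$ you record is a small bonus the paper does not state.
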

\begin{proof} Let $k=1$, then by Corollary 4.2, $ab$ has generalized Hirano inverse if and only if $ba$ has generalized Hirano inverse. Now let $k\geq 2$, as $(ab)^k=a(b(ab)^{k-1})$, then by Corollary 4.2, $(b(ab)^{k-1})a$ has generalized Hirano inverse, so $(ba)^k$ has generalized Hirano inverse.\end{proof}

\begin{lem} Let $A$ be a Banach algebra, $a,b\in A$ and $ab=ba$.\end{lem}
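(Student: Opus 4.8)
I read the conclusion of the lemma as asserting that, if in addition $a$ and $b$ have generalized Hirano inverses, then $ab$ has a generalized Hirano inverse (presumably with $(ab)^{h}=a^{h}b^{h}$). The plan is to sidestep the double-commutant requirement by invoking the Banach-algebra criterion of Corollary 2.9: it suffices to produce an idempotent $e\in A$ with $e(ab)=(ab)e$ and $(ab)^{2}-e\in A^{qnil}$.

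First I would unpack the hypotheses. By Theorem 2.5 together with Proposition 2.7 there are (unique) idempotents $p\in comm^{2}(a)$ and $q\in comm^{2}(b)$ with $s:=a^{2}-p\in A^{qnil}$ and $t:=b^{2}-q\in A^{qnil}$. Then comes the commutation bookkeeping forced by $ab=ba$: since $b\in comm(a)$ and $p\in comm^{2}(a)$ we get $pb=bp$, hence $pb^{2}=b^{2}p$ and $pq=qp$; symmetrically $aq=qa$ and $a^{2}q=qa^{2}$; consequently $p,q,s,t$ pairwise commute, since they all lie in the commutative subalgebra generated by $p,q,a^{2},b^{2}$. Put $e:=pq$; this is an idempotent which commutes with $a$ and with $b$, hence with $ab$.

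Next I would expand $(ab)^{2}=a^{2}b^{2}=(p+s)(q+t)=pq+pt+sq+st=e+(pt+sq+st)$. Here $pt\in A^{qnil}$ and $sq\in A^{qnil}$ by Lemma 2.1 (an idempotent times a commuting quasinilpotent), while $st\in A^{qnil}$ because in a Banach algebra $\|(st)^{n}\|^{1/n}=\|s^{n}t^{n}\|^{1/n}\le\|s^{n}\|^{1/n}\|t^{n}\|^{1/n}\to 0$, using $\lim_{n}\|t^{n}\|^{1/n}=0$ and the boundedness of $(\|s^{n}\|^{1/n})_{n}$. The three summands commute pairwise, so their sum is a sum of pairwise-commuting quasinilpotents, hence quasinilpotent (subadditivity of the spectral radius on a commutative subalgebra, together with the characterisation $x\in A^{qnil}\Leftrightarrow\lim_{n}\|x^{n}\|^{1/n}=0$ recalled in the introduction). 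Thus $(ab)^{2}-e\in A^{qnil}$, and Corollary 2.9 yields that $ab$ has a generalized Hirano inverse. The identity $(ab)^{h}=a^{h}b^{h}$, if required, then follows from Corollary 2.3 ($a^{h}=a^{d}$, $b^{h}=b^{d}$, $(ab)^{h}=(ab)^{d}$) and the classical product formula $(ab)^{d}=a^{d}b^{d}$ for commuting generalized-Drazin-invertible elements.

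The main obstacle is the commutation bookkeeping rather than any single computation: membership of $p$ in $comm^{2}(a)$ only grants commutation with elements that commute with $a$, so the chain $ab=ba\Rightarrow pb=bp\Rightarrow pq=qp$ (and its mirror for $q$) must be written out carefully before $e=pq$ can be exploited. A secondary point to watch is that ``a commuting product, or sum, of quasinilpotents is quasinilpotent'' is genuinely a Banach-algebra fact and has to be argued through the spectral-radius description of $A^{qnil}$, not the bare ring-theoretic definition. Finally, note that quoting Corollary 2.9 instead of exhibiting the Hirano inverse by hand is precisely what lets us get away with only ordinary commutation of $e$ with $ab$; a direct argument would in addition demand $e\in comm^{2}(ab)$, which is awkward since an element commuting with $ab$ need not commute with $a$ or with $b$ individually.
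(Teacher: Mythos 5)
You have proved the wrong statement. The lemma whose header you were given is Lemma 4.4, and its actual conclusion (the enumerated items appear just after the header in the source) is the pair of quasinilpotent facts: (1) if $a,b\in A^{qnil}$ commute then $a+b\in A^{qnil}$, and (2) if $a$ or $b$ lies in $A^{qnil}$ and $ab=ba$ then $ab\in A^{qnil}$. The paper does not prove this at all; it simply cites Lemmas 2.10 and 2.11 of Zhu--Mosi\'c--Chen. What you have written is instead a proof of (essentially) Theorem 4.5 of the paper, the product formula for commuting generalized Hirano invertible elements -- and, ironically, your argument \emph{uses} Lemma 4.4 as an ingredient (your treatment of $pt$, $sq$, $st$ and of the sum $pt+sq+st$ is exactly where the lemma is invoked in the paper's own proof of Theorem 4.5).

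That said, the content needed for the actual lemma is present inside your write-up: your estimate $\|(st)^n\|^{1/n}=\|s^nt^n\|^{1/n}\le\|s^n\|^{1/n}\|t^n\|^{1/n}\to 0$ is a correct proof of part (2) via the spectral-radius characterisation of $A^{qnil}$, and your appeal to subadditivity of the spectral radius on commuting elements is the standard proof of part (1) (one should write out $r(a+b)\le r(a)+r(b)$ for commuting $a,b$, e.g.\ via the binomial expansion of $\|(a+b)^n\|$, rather than just asserting it). So if you extract those two paragraphs and discard the surrounding Hirano-inverse machinery, you recover a self-contained proof of Lemma 4.4, which is more than the paper offers. Two small corrections to your larger argument: the Banach-algebra criterion you call Corollary 2.9 is Corollary 2.8 in the paper's numbering, and the uniqueness of the idempotent comes from Proposition 2.7, whose converse is only available in the Banach-algebra setting.
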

\begin{enumerate}
\item [(1)]{\it If $a,b\in R^{qnil}$, then $a+b\in R^{qnil}$.}
\vspace{-.5mm}
\item [(2)]{\it If $a$ or $b\in R^{qnil}$, then $ab\in R^{qnil}$.}
\end{enumerate}
\begin{proof} Seebb~\cite[Lemma 2.10 and Lemma 2.11]{ZMC}.\end{proof}

\begin{thm} Let $A$ be a Banach algebra , and let $a, b\in A$. If $a,b$ have generalized Hirano inverses and $ab=ba$. Then $ab$ has generalized Hirano inverse and $$(ab)^{h}=a^{h}b^{h}.$$
\end{thm}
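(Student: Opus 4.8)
The plan is to reduce the statement to the idempotent characterisation of Theorem 2.5 (equivalently Corollary 2.8, which in a Banach algebra only asks for a \emph{commuting} idempotent), and then to track the two canonical idempotents through the product. First I would apply Theorem 2.5 to $a$ and to $b$: there are idempotents $p\in comm^2(a)$ and $q\in comm^2(b)$ with $w:=a^2-p\in A^{qnil}$ and $v:=b^2-q\in A^{qnil}$. Because $ab=ba$ we have $b\in comm(a)$, so $p$ commutes with $b$, hence with $b^2$ and with $q$; symmetrically $q$ commutes with $a$. Consequently $a,b,p,q$ — and therefore also $w$ and $v$ — commute pairwise. Put $E:=pq$; then $E^2=E$ and $E$ commutes with $ab$.

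The heart of the argument is to check that $(ab)^2-E\in A^{qnil}$. Expanding,
$$a^2b^2-pq=(a^2-p)q+p(b^2-q)+(a^2-p)(b^2-q)=wq+pv+wv.$$
Here $wq$ and $pv$ lie in $A^{qnil}$ by Lemma 2.1 (an idempotent times a commuting quasinilpotent is quasinilpotent), and $wv\in A^{qnil}$ by Lemma 4.4(2); these three summands commute pairwise, so two applications of Lemma 4.4(1) give $(ab)^2-E\in A^{qnil}$. Since $E$ is an idempotent commuting with $ab$, Corollary 2.8 yields that $ab$ has a generalized Hirano inverse.

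It remains to identify $(ab)^h$ with $a^hb^h$. Following the construction in the proofs of Theorem 2.5 and Corollary 2.8, and invoking the uniqueness from Corollary 2.3, one has the closed forms $a^h=a(a^2+1-p)^{-1}p$, $b^h=b(b^2+1-q)^{-1}q$, and $(ab)^h=(ab)\big((ab)^2+1-pq\big)^{-1}pq$. As all factors commute, it suffices to prove
$$(a^2+1-p)^{-1}p\cdot(b^2+1-q)^{-1}q=\big((ab)^2+1-pq\big)^{-1}pq.$$
Writing $s_a=(a^2+1-p)^{-1}p$ and $s_b=(b^2+1-q)^{-1}q$, a short computation shows $s_a\in pAp$ with $a^2s_a=s_aa^2=p$, and likewise $s_b\in qAq$ with $b^2s_b=s_bb^2=q$. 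Then, again by commutativity, $s_as_b\in(pq)A(pq)$ and $(ab)^2s_as_b=(a^2s_a)(b^2s_b)=pq$, so $s_as_b$ is the inverse of $(ab)^2$ in the corner ring $(pq)A(pq)$ — which is exactly $\big((ab)^2+1-pq\big)^{-1}pq$. This gives $(ab)^h=a^hb^h$.

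The step I expect to be the main obstacle is this last identification: one cannot simply quote $a^h\in comm^2(a)$ and $b^h\in comm^2(b)$ to conclude $a^hb^h\in comm^2(ab)$, because commuting with $ab$ is genuinely weaker than commuting with $a$ and with $b$ separately. Routing through the explicit corner‑ring formula above circumvents the difficulty; an alternative would be to combine Corollary 2.3 ($a^h=a^d$, $b^h=b^d$, $(ab)^h=(ab)^d$) with the classical identity $(ab)^d=a^db^d$ for commuting elements, but then exactly the same corner‑ring bookkeeping is needed to verify the double‑commutant condition for $(ab)^d$.
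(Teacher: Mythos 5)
Your proof is correct, but it takes a genuinely different route from the paper's. The paper verifies the three defining conditions for $a^hb^h$ directly: the identity $a^hb^h\,ab\,a^hb^h=a^hb^h$ and the membership $(ab)^2-ab\,a^hb^h=a^2(b^2-bb^h)+b^2(a^2-aa^h)-(a^2-aa^h)(b^2-bb^h)\in A^{qnil}$ are checked exactly as you would expect, but for the double-commutant condition the paper only writes ``Let $e\in comm^2(ab)$, then $a^hb^he=a^heb^h=ea^hb^h$, as $a^h,b^h\in comm(ab)$'' --- which is precisely the non sequitur you flagged: an element commuting with $ab$ need not commute with $a$ or $b$, hence need not commute with $a^h$ or $b^h$, so the paper's own verification of $a^hb^h\in comm^2(ab)$ has a gap. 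Your detour through the spectral idempotents $p,q$, the sum $(ab)^2-pq=wq+pv+wv\in A^{qnil}$ via Lemmas 2.1 and 4.4, and Corollary 2.8 repairs this, because Corollary 2.8 manufactures the double-commutant property from the generalized Drazin theory in Banach algebras (via \cite[Theorem 7.5.3]{H}) rather than asking you to check it by hand; the corner-ring identification of $s_as_b$ with $\big((ab)^2+1-pq\big)^{-1}pq$ then pins down the formula by uniqueness (Proposition 2.7 and Corollary 2.3). What the paper's approach buys is brevity and a formula that drops out immediately; what yours buys is an actually complete argument, at the cost of invoking the Banach-algebra machinery one more time. One small presentational point: state explicitly that $pq$ is, by Proposition 2.7, \emph{the} unique idempotent commuting with $ab$ whose difference from $(ab)^2$ is quasinilpotent, since that is what licenses the closed form $(ab)^h=(ab)\big((ab)^2+1-pq\big)^{-1}pq$ before you compare it with $a^hb^h$.
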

\begin{proof} In order to prove that $(ab)^{h}=a^{h}b^{h}$, we need to prove
 $$a^{h}b^{h}aba^{h}b^{h}=a^{h}b^{h},$$
 $a^{h}b^{h}\in comm^2(ab)$ and $(ab)^2-aba^{h}b^{h}\in R^{qnil}.$

As $a^{h}aa^{h}=a^{h}$ and $ b^{h}bb^{h}=b^{h}$, also $a$ can be commuted with $a^{h}, b, b^{h}$ and $b$ can be commuted with $a, a^{h}, b^{h}$,
we have $$ a^{h}b^{h}aba^{h}b^{h}=a^{h}b^{h}.$$ Let $e\in comm^2(ab)$, then $a^{h}b^{h}e=a^{h}eb^{h}=ea^{h}b^{h}$, as $a^{h} , b^{h}\in com(ab).$
Note that
$$\begin{array}{ll}
&(ab)^2-aba^{h}b^{h}\\
=&a^2(b^2-bb^{h})+b^2(a^2-aa^{h})-(a^2-aa^{h})(b^2-bb^{h})
\end{array}$$
that is in $R^{qnil}$ by Lemma 4.4, which implies that $(ab)^2-aba^{h}b^{h}\in R^{qnil}.$
\end{proof}

\begin{cor} Let $A$ be a Banach algebra. If $a\in A$ has generalized Hirano inverse, then $a^n$ has generalized Hirano inverse and $(a^n)^{h}=(a^{h})^n$ for all $n\in {\Bbb N}$.\end{cor}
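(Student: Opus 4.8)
The plan is to proceed by induction on $n$, using Theorem 4.5 as the engine. The case $n=1$ is trivial, since then $a^n=a$ and $(a^n)^h=a^h=(a^h)^1$ by hypothesis. So assume $n\geq 1$ and that $a^n$ has generalized Hirano inverse with $(a^n)^h=(a^h)^n$; I would then show that $a^{n+1}=a\cdot a^n$ has generalized Hirano inverse and compute its value.

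The key observation is that the two elements $a$ and $a^n$ commute, so the commutativity hypothesis $ab=ba$ of Theorem 4.5 is satisfied with $(a,b)=(a,a^n)$. By the inductive hypothesis $a^n$ has generalized Hirano inverse, and by the standing hypothesis $a$ has generalized Hirano inverse; hence Theorem 4.5 applies and yields that $a^{n+1}=a\cdot a^n$ has generalized Hirano inverse with
$$(a^{n+1})^{h}=a^{h}(a^n)^{h}.$$
Substituting $(a^n)^h=(a^h)^n$ from the inductive hypothesis gives $(a^{n+1})^h=a^h(a^h)^n=(a^h)^{n+1}$, which closes the induction.

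I do not expect any genuine obstacle here: everything reduces to checking that $a$ and $a^n$ are commuting elements with generalized Hirano inverses, and then invoking Theorem 4.5 together with the inductive hypothesis. The only point worth a sentence of care is that the power $(a^h)^n$ appearing on the right is itself a well-defined element and that the formula $(a^n)^h=(a^h)^n$ is exactly what feeds back into the next stage of the induction; if one wished, one could also note in passing that $a^h\in comm^2(a)$ forces $a^h$ to commute with every power of $a$, which is implicitly used when Theorem 4.5 is applied. Beyond that, the argument is a routine finite induction and no estimates or quasinilpotency computations are needed, since those are already absorbed into the statement of Theorem 4.5.
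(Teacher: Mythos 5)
Your proposal is correct and is exactly the argument the paper intends: its proof of this corollary is the one-line remark that the result ``easily follows from Theorem 4.5 and using induction on $n$,'' and your write-up simply supplies the details (taking $b=a^n$, noting $a$ and $a^n$ commute, and feeding $(a^n)^h=(a^h)^n$ back into the induction). No difference in approach and no gap.
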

\begin{proof} It easily follows from Theorem 4.5 and using induction on $n$.\end{proof}

We note that the converse of the previous corollary is not true.

\begin{exam} Let ${\Bbb Z}_5={\Bbb Z}/5{\Bbb Z}$ be the ring of integers modulo $5$. Then $-2\in {\Bbb Z}_5$ has no generalized Hirano inverse. If $b$ is the generalized Hirano inverse of $-2$, then $-2b^2=b$ which implies that $b=0$ or $b=2$, in both cases $(-2)^2-(-2)b$ is not quasinilpotent, a contradiction.
But $(-2)^2=-1\in {\Bbb Z}$ has generalized Hirano inverse $1$.\end{exam}

\section{Additive properties}

In this section, we are concern on additive properties of generalized Hirano inverses in a Banach algebra. Let $p\in R$ be an idempotent, and let $x\in R$. Then we write $$x=pxp+px(1-p)+(1-p)xp+(1-p)x(1-p),$$ and induce a representation given by the matrix
$$x=\left(\begin{array}{cc}
pxp&px(1-p)\\
(1-p)xp&(1-p)x(1-p)
\end{array}
\right)_p,$$ and so we may regard such matrix as an element in $R$. The following lemma is crucial.

\begin{lem} Let $R$ be a Banach algebra, let $a\in A$ and let $$x=\left(\begin{array}{cc}
a&c\\
0&b
\end{array}
\right)_p,$$ relative to $p^2=p\in A$. If $a\in pAp$ and $b\in (1-p)A(1-p)$ have generalized Hirao inverses, then so is $x$ in $A$.
\end{lem}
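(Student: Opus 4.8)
The plan is to invoke the Banach-algebra criterion of Corollary~2.8: it is enough to exhibit an idempotent $e\in A$ with $ex=xe$ and $x^2-e\in A^{qnil}$. By Theorem~2.5 applied inside the corner Banach algebras $pAp$ and $(1-p)A(1-p)$, there are idempotents $p_1\in pAp$ and $p_2\in(1-p)A(1-p)$, commuting with $a$ and $b$ respectively, such that $a^2-p_1\in(pAp)^{qnil}$ and $b^2-p_2\in((1-p)A(1-p))^{qnil}$; since each corner is a closed unital subalgebra of $A$ with the restricted norm, both differences lie in $A^{qnil}$. Using Lemma~2.1 I would record that $a_1:=ap_1$ is invertible in $p_1Ap_1$ (indeed $a_1^2=a^2p_1=p_1+(a^2-p_1)p_1$ is the identity of $p_1Ap_1$ plus a quasinilpotent), while $a_2:=a(p-p_1)$ is quasinilpotent in $(p-p_1)A(p-p_1)$ (because $a_2^2=(a^2-p_1)(p-p_1)$ is quasinilpotent, and a square root of a quasinilpotent is quasinilpotent in a Banach algebra); likewise $b$ decomposes through $p_2$ as $b=b_1\oplus b_2$ with $b_1$ invertible in $p_2Ap_2$ and $b_2$ quasinilpotent.

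Next I would look for $e$ of block upper-triangular shape $e=\left(\begin{array}{cc}p_1&z\\0&p_2\end{array}\right)_p$ with $z\in pA(1-p)$. The equation $e^2=e$ amounts to $p_1z+zp_2=z$, which (multiplying on suitable sides by $p_1,\,p-p_1,\,p_2,\,1-p-p_2$) is equivalent to $p_1zp_2=0$ and $(p-p_1)z(1-p-p_2)=0$, that is, $z=u+v$ with $u\in p_1A(1-p-p_2)$ and $v\in(p-p_1)Ap_2$. Since $p_1a=ap_1$ and $p_2b=bp_2$, the condition $ex=xe$ reduces to the single Sylvester-type relation $az-zb=p_1c-cp_2$; projecting onto the corners $p_1A(1-p-p_2)$ and $(p-p_1)Ap_2$ (on which the right-hand side equals $p_1c(1-p-p_2)$ and $-(p-p_1)cp_2$) it splits as $a_1u-ub_2=p_1c(1-p-p_2)$ and $a_2v-vb_1=-(p-p_1)cp_2$. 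In the first, rewrite $u-a_1^{-1}ub_2=a_1^{-1}p_1c(1-p-p_2)$: on the Banach space $p_1A(1-p-p_2)$ the operator $X\mapsto a_1^{-1}Xb_2$ is the composite of the bounded left-multiplication operator $X\mapsto a_1^{-1}X$ with the right-multiplication operator $R_{b_2}\colon X\mapsto Xb_2$, these commute, and $\|R_{b_2}^n\|\le\|b_2^n\|$ shows $R_{b_2}$ quasinilpotent, so $I$ minus the composite is invertible and $u=\sum_{n\ge 0}a_1^{-(n+1)}\,p_1c(1-p-p_2)\,b_2^n$ converges; the second relation is solved symmetrically using invertibility of $b_1$ and quasinilpotence of $a_2$. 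With this $z$, the element $e$ is an idempotent commuting with $x$, and $x^2-e=\left(\begin{array}{cc}a^2-p_1&ac+cb-z\\0&b^2-p_2\end{array}\right)_p$ is block upper-triangular with quasinilpotent diagonal entries; expanding $(x^2-e)^n$ and using $\|(a^2-p_1)^n\|^{1/n}\to 0$ and $\|(b^2-p_2)^n\|^{1/n}\to 0$ gives $\limsup_n\|(x^2-e)^n\|^{1/n}=0$, so $x^2-e\in A^{qnil}$, and Corollary~2.8 yields that $x$ has generalized Hirano inverse.

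The hard part will be solving $az-zb=p_1c-cp_2$: one cannot in general remove $c$ by a similarity (for instance when $a=b$), so it must be attacked directly, and the decisive point is that once $a,b$ are split through their Hirano idempotents every component equation takes the form $(\text{invertible})X-X(\text{quasinilpotent})=d$ or $(\text{quasinilpotent})X-X(\text{invertible})=d$, each solved by a Neumann series converging in the corner Banach space. A shorter but more functional-calculus-heavy alternative would be to observe, from the block upper-triangular form of $\lambda-x^2$, that $\sigma_A(x^2)\subseteq\sigma_{pAp}(a^2)\cup\sigma_{(1-p)A(1-p)}(b^2)\subseteq\{0,1\}$ (the last inclusion since $a^2=p_1+(\text{quasinilpotent})$ and $b^2=p_2+(\text{quasinilpotent})$), and then to take for $e$ either $0$, or $1$, or the Riesz idempotent of $x^2$ at $\{1\}$, according as $\sigma(x^2)$ is $\{0\}$, $\{1\}$, or $\{0,1\}$; in each case $e$ commutes with $x$ and $x^2-e$ is a sum of two commuting quasinilpotents, hence quasinilpotent, and Corollary~2.8 again applies.
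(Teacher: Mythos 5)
Your proof is correct, but it takes a genuinely different route from the paper's. The paper does not build the idempotent by hand: it first passes to generalized Drazin inverses (Corollary 2.3), imports from \cite[Lemma 2.1]{ZMC} the known block upper-triangular formula $e=\left(\begin{smallmatrix} a^d & u\\ 0 & b^d\end{smallmatrix}\right)_p$ with $e\in comm^2(x)$ and $e=e^2x$, and then only has to check the extra Hirano condition $x^2-xe\in A^{qnil}$; this it does by writing $x^2-xe=\left(\begin{smallmatrix} a^2-aa^d & v\\ 0 & b^2-bb^d\end{smallmatrix}\right)_p$ and applying \cite[Lemma 2.1]{C} twice, using only the orthogonality relations $v(a^2-aa^d)=0$, $v^2=0$ and $(b^2-bb^d)\bigl((a^2-aa^d)+v\bigr)=0$ --- no norms and no series. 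You instead aim at the weaker target allowed by Corollary 2.8 (an idempotent merely commuting with $x$, not in $comm^2(x)$, which suffices only because $A$ is a Banach algebra), and you construct it directly: split $a$ and $b$ through their Hirano idempotents into invertible and quasinilpotent parts, solve the corner Sylvester equations $a_1u-ub_2=p_1c(1-p-p_2)$ and $a_2v-vb_1=-(p-p_1)cp_2$ by Neumann series (each pairing an invertible against a quasinilpotent, so the root test gives convergence), and finish with the spectral-radius estimate for a block triangular element with quasinilpotent diagonal. What your approach buys is self-containedness: it avoids the external Drazin-inverse machinery of \cite{ZMC} and exhibits the spectral idempotent explicitly. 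What the paper's approach buys is brevity and the stronger conclusion $e\in comm^2(x)$ without any analytic input beyond the two cited lemmas; it also makes clearer why the constructed $e$ is actually the generalized Hirano inverse of $x$ rather than just certifying its existence. Your closing Riesz-idempotent alternative is also sound and is arguably the shortest correct argument in the Banach-algebra setting, though it is the furthest from the ring-theoretic spirit of the rest of the paper.
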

\begin{proof} In view of Corollary 2.3, $a$ and $b$ have generalized Drazin inverses, and that $a^2-aa^d, b^2-bb^d\in A^{qnil}$. Hence,
In view of~\cite[Lemma 2.1]{ZMC}, we can find $u\in pA(1-p)$ such that $$e=\left(\begin{array}{cc}
a^d&u\\
0&b^d
\end{array}
\right)_p\in comm^2(x), e=e^2x~\mbox{and}~x-x^2e\in A^{qnil}.$$ Thus,
$$x^2-xe=\left(\begin{array}{cc}
a^2-aa^d&v\\
0&b^2-bb^d
\end{array}
\right)_p$$ for some $v\in A$. Clearly, $v(a^2-aa^d)=0$ and $v^2=0$. It follows by ~\cite[Lemma 2.1]{C} that $(a^2-aa^d)+v\in A^{qnil}$. Furthermore,
$(b^2-bb^d)\big((a^2-aa^d)+v\big)=0$. By using ~\cite[Lemma 2.1]{C} again, $(a^2-aa^d)+v+(b^2-bb^d)\in A^{qnil}$. Therefore $x^2-xe\in A^{qnil}$.
Accordingly, $x$ has generalized Hirao inverses, as asserted.\end{proof}

\begin{lem} Let $A$ be a Banach algebra. Suppose that $a\in A^{qnil}$ and $b\in A$ has generalized Hirano inverses. If $$a=ab^{\pi},b^{\pi}ba=b^{\pi}ab,$$ then $a+b$ has generalized Hirano inverse.\end{lem}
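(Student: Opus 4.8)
The plan is to put $a+b$ into block upper‑triangular form relative to the idempotent $p:=bb^{h}=1-b^{\pi}$ and then apply Lemma 5.1, so that it suffices to supply generalized Hirano inverses for the two diagonal corners. The first step is to record the structure of $b$. From the definition of the generalized Hirano inverse we have $b^{h}bb^{h}=b^{h}$, $b^{h}\in comm^{2}(b)$ and $b^{2}-bb^{h}\in A^{qnil}$; hence $p=bb^{h}$ is an idempotent, $p\in comm^{2}(b)$ (so $pb=bp$), and $b^{2}-p\in A^{qnil}$. Putting $q=1-p=b^{\pi}$, the element $b$ is block diagonal, $b=pbp+qbq$ with $b_{1}:=pbp=pb\in pAp$ and $b_{2}:=qbq=qb\in qAq$. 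By Corollary 2.3 the generalized Hirano inverse of $b$ coincides with its generalized Drazin inverse, so $b_{2}=bb^{\pi}=b-b^{2}b^{d}\in A^{qnil}$; and $b_{1}^{2}-p=pb^{2}-p=p(b^{2}-p)\in A^{qnil}$ by Lemma 2.1. Since $p$ and $q$ are the identities of the closed unital subalgebras $pAp$ and $qAq$, and quasinilpotency is detected by the spectral radius (as recalled in the introduction), $b_{1}^{2}-p\in(pAp)^{qnil}$ and $b_{2}\in(qAq)^{qnil}$. Applying Theorem 2.5 inside $pAp$, with the idempotent taken to be $p$ itself, shows that $b_{1}$ has a generalized Hirano inverse in $pAp$.

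Next I would exploit the hypothesis $a=ab^{\pi}=aq$, equivalently $ap=0$, which forces $pap=qap=0$; hence relative to $p$
$$a+b=\left(\begin{array}{cc} b_{1} & pa\\ 0 & qa+b_{2}\end{array}\right)_{p}.$$
The $(1,1)$‑corner $b_{1}$ already has a generalized Hirano inverse, so by Lemma 5.1 everything reduces to showing that $c:=qa+b_{2}\in qAq$ has one, and I would do this by proving $c\in A^{qnil}$, so that $0$ is a generalized Hirano inverse of $c$ in $qAq$ by Theorem 2.5. Two observations close the argument. First, $qa\in A^{qnil}$: a short induction using $aq=a$ gives $(qa)^{n}=qa^{n}$ for all $n$, whence $\|(qa)^{n}\|^{1/n}\le\|q\|^{1/n}\|a^{n}\|^{1/n}\to 0$ as $n\to\infty$. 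Second, $qa$ and $b_{2}$ commute: $b_{2}(qa)=(qb)(qa)=q^{2}ba=qba$, while $(qa)b_{2}=(qa)(qb)=q(aq)b=qab$, and $qba=qab$ is precisely the hypothesis $b^{\pi}ba=b^{\pi}ab$. Since $qa$ and $b_{2}$ are commuting quasinilpotents, Lemma 4.4(1) gives $c=qa+b_{2}\in A^{qnil}$, hence $c\in(qAq)^{qnil}$ and $c$ has a generalized Hirano inverse in $qAq$. Lemma 5.1 now yields that $a+b$ has a generalized Hirano inverse in $A$.

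The crux — and the only place where both hypotheses are genuinely used — is the analysis of the $(2,2)$‑corner $qa+b_{2}$: the relation $a=aq$ is what simultaneously makes $qa$ quasinilpotent (via $(qa)^{n}=qa^{n}$) and collapses $(qa)b_{2}$ to $qab$, while $b^{\pi}ba=b^{\pi}ab$ provides the commutation that lets Lemma 4.4(1) apply; the rest is routine bookkeeping with the Peirce decomposition together with Lemma 2.1, Corollary 2.3, Lemma 4.4 and Lemma 5.1. The one subsidiary point that must be handled carefully is that $A^{qnil}\cap eAe=(eAe)^{qnil}$ for an idempotent $e$, used above for $e=p$ and $e=q$; this is immediate from the spectral‑radius characterization of quasinilpotent elements.
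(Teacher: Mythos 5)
Your proof is correct and follows essentially the same route as the paper's: Peirce decomposition relative to $p=bb^{h}$, block upper-triangularization of $a+b$ using $ap=0$, Lemma 4.4 on the commuting quasinilpotents in the $(2,2)$-corner, and Lemma 5.1 to assemble the result. You are in fact somewhat more careful than the paper, which omits the verification that $b^{\pi}a$ is quasinilpotent and that $pbp$ has a generalized Hirano inverse in $pAp$, and your uniform treatment makes the paper's case split on $b\in A^{qnil}$ unnecessary.
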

\begin{proof} Case 1. $b\in A^{qnil}$. Then $b^{\pi}=1$, and so it follows from $b^{\pi}ba=b^{\pi}ab$ that $ab=ba$.
In light of Lemma 4.4, $a+b\in A^{qnil}$. By using Lemma 4.4 again, $(a+b)^2\in A^{qnil}$. Therefore $a+b$ has generalized Hirano inverse by Theorem 2.5.

Case 2. $b\not\in A^{qnil}$. In view of Theorem 2.1, $b$ has generalized Drazin inverse, and so we have
$$b=\left(\begin{array}{cc}
b_1&0\\
0&b_2
\end{array}
\right)_p, a=\left(\begin{array}{cc}
a_{11}&a_{1}\\
a_{21}&a_2
\end{array}
\right)_p,$$ where $b_1\in U(pAp), b_2\in ((1-p)A(1-p))^{qnil}\subseteq A^{qnil}$. From $a=ab^{\pi}$, we see that $a_{11}=a_{21}=0$, and so
$$a+b=\left(\begin{array}{cc}
b_1&a_1\\
0&a_2+b_2
\end{array}
\right)_p.$$ It follows from $b^{\pi}ba=b^{\pi}ab$ that $a_2b_2=b_2a_2$. By Lemma 4.4, $a_2+b_2\in A^{qnil}$, and then $(a_2+b_2)^2\in A^{qnil}$. In light of Theorem 2.5, $a_2+b_2$ has generalized Hirano inverse. Since $b$ has generalized Hirano inverse in $A$, one easily checks that $b_1=pbp$ has generalized inverse. According to Lemma 5.1, $a+b$ has generalized Hirano inverse, thus completing the proof.\end{proof}

\begin{thm} Let $A$ be a Banach algebra. If $a,b\in A$ have generalized Hirano inverses and satisfy
$$a=ab^{\pi}, b^{\pi}ba^{\pi}=b^{\pi}b, b^{\pi}a^{\pi}ba=b^{\pi}a^{\pi}ab,$$ then $a+b$ has generalized Hirano inverse and
$$\begin{array}{lll}
(a+b)^{h}&=&\big(b^{h}+\sum\limits_{n=0}^{\infty}(b^{h})^{n+2}a(a+b)^n\big)a^{\pi}\\
&-&\sum\limits_{n=0}^{\infty}\sum\limits_{k=0}^{\infty}(b^{h})^{n+2}a(a+b)^n(a^{h})^{k+2}b(a+b)^{k+1}\\
&+&\sum\limits_{n=0}^{\infty}(b^{h})^{n+2}a(a+b)^na^{h}b-
\sum\limits_{n=0}^{\infty}b^{h}a(a^{h})^{n+2}b(a+b)^n.
\end{array}$$\end{thm}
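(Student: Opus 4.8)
The plan is to reduce the statement to Lemma 5.2 (the case $a\in A^{qnil}$) by a Pierce-decomposition argument relative to the idempotent $p=aa^{h}$ coming from the generalized Hirano (equivalently, generalized Drazin) inverse of $a$. First I would invoke Corollary 2.3 and Theorem 2.1 to replace the generalized Hirano inverses of $a$ and $b$ by generalized Drazin inverses, so that $p=aa^h=a^{h}a$ is an idempotent in $comm^{2}(a)$ with $a=a_{1}\oplus a_{2}$, where $a_{1}=pap\in U(pAp)$ and $a_{2}=(1-p)a(1-p)\in\bigl((1-p)A(1-p)\bigr)^{qnil}\subseteq A^{qnil}$. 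Writing $b$ in the same Pierce coordinates, the hypothesis $b^{\pi}ba^{\pi}=b^{\pi}b$ should be used to force the $(1-p)$-corner of $b$ to have a block-triangular (indeed block-diagonal on the relevant piece) shape, exactly as in the proof of Lemma 5.2; the remaining two hypotheses $a=ab^{\pi}$ and $b^{\pi}a^{\pi}ba=b^{\pi}a^{\pi}ab$ are what make the pieces fit together.

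The key steps, in order: (i) set $p=aa^{h}$, decompose everything relative to $p$, and record that $a_{1}$ is invertible in the corner ring while $a_{2}$ is quasinilpotent; (ii) show that $b$ respects this decomposition in the needed sense --- concretely, that the $(1-p)$-corner piece $b_{2}=(1-p)b(1-p)$ has generalized Hirano inverse (this uses that $b$ has one in $A$ together with Corollary 2.3 / the corner-ring behaviour of generalized Drazin inverses) and that the off-diagonal coupling vanishes after applying $a^{\pi}=1-aa^h$; (iii) observe that $a_{1}\in A^{qnil}$? --- no: rather, on the $p$-corner $a$ is invertible, so there $a+b$ is handled directly, while on the $(1-p)$-corner we have $a_{2}\in A^{qnil}$, $b_{2}$ has generalized Hirano inverse, and the commuting-type hypotheses degenerate exactly to the hypotheses of Lemma 5.2 with $a$ replaced by $a_{2}$ and $b$ by $b_{2}$; (iv) apply Lemma 5.2 to get that $a_{2}+b_{2}$ has generalized Hirano inverse, then reassemble via Lemma 5.1 (the block-triangular lemma) to conclude that $a+b$ has generalized Hirano inverse. (v) Finally, to obtain the explicit formula for $(a+b)^{h}$, I would compute the generalized Drazin inverse of the $2\times 2$ block-triangular matrix built in step (iv) using the standard Drazin-inverse formula for triangular matrices --- $\bigl(\begin{smallmatrix}x&z\\0&y\end{smallmatrix}\bigr)^{d}=\bigl(\begin{smallmatrix}x^{d}&w\\0&y^{d}\end{smallmatrix}\bigr)$ with $w=\sum_{n\ge 0}(x^{d})^{n+2}zy^{n}y^{\pi}+\sum_{n\ge 0}x^{\pi}x^{n}z(y^{d})^{n+2}-x^{d}zy^{d}$ --- and translate it back through the two Pierce decompositions (one for $a$, one for $b$), where the four displayed summands correspond precisely to the four terms of that triangular-inverse formula specialized to the present situation.

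I expect the main obstacle to be step (ii)–(iii): verifying that the three hypotheses $a=ab^{\pi}$, $b^{\pi}ba^{\pi}=b^{\pi}b$, $b^{\pi}a^{\pi}ba=b^{\pi}a^{\pi}ab$ really do collapse, in the Pierce coordinates of $p=aa^h$, to the two hypotheses $a_{2}=a_{2}b_{2}^{\pi}$ and $b_{2}^{\pi}b_{2}a_{2}=b_{2}^{\pi}a_{2}b_{2}$ needed to invoke Lemma 5.2, and simultaneously that the off-diagonal block in $a+b$ is genuinely upper-triangular so that Lemma 5.1 applies. This is a bookkeeping-heavy computation because $b^{h}=b^{d}$ need not commute with $p=aa^{h}$, so one must be careful about which corners the spectral idempotents $aa^{h}$ and $bb^{h}$ land in; the hypotheses are engineered exactly so that $a^{\pi}$ annihilates the obstructions, and the bulk of the work is tracking that cancellation. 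Once the reduction is in place, the formula is a mechanical substitution into the known triangular generalized-Drazin-inverse identity, and the convergence of the double series is automatic since each inner sum is a quasinilpotent-type Neumann series in a Banach algebra.
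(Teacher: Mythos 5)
Your overall architecture (Pierce decomposition, reduce one corner to Lemma 5.2, reassemble with the block-triangular Lemma 5.1, then quote the known generalized Drazin additive formula for the explicit expression) is the same as the paper's, but you have chosen the wrong splitting idempotent, and this is not mere bookkeeping --- it breaks the argument. The paper decomposes relative to $p=bb^{h}$, the spectral idempotent of $b$, not $p=aa^{h}$. With $p=bb^{h}$ the element $b$ is automatically block diagonal, with $b_{1}=pbp\in U(pAp)$ and $b_{2}=(1-p)b(1-p)\in A^{qnil}$, and the hypothesis $a=ab^{\pi}=a(1-p)$ gives $ap=0$, which is exactly what annihilates the first column of $a$ and makes $a+b$ block upper triangular with diagonal blocks $b_{1}$ and $a_{2}+b_{2}$. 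The block $b_{1}$ inherits a generalized Hirano inverse from $b$, and the remaining two hypotheses restrict on the $(1-p)$-corner to the hypotheses of Lemma 5.2 with the roles of the two elements swapped (there $b_{2}$ is the quasinilpotent one and $a_{2}$ the generalized Hirano invertible one).

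With your choice $p=aa^{h}$ none of this happens: $a$ is block diagonal, but nothing forces $b$ to be block triangular relative to $aa^{h}$ (the condition $b^{\pi}ba^{\pi}=b^{\pi}b$ only says $(1-bb^{h})b\,aa^{h}=0$, which controls neither $pb(1-p)$ nor $(1-p)bp$), so Lemma 5.1 does not apply. Even if it did, your claim that the $p$-corner is ``handled directly'' because $a$ is invertible there is false: the relevant diagonal block would be $pap+pbp$, and an invertible element need not have a generalized Hirano inverse at all (for instance $2\in{\Bbb C}$ has none, since neither $4$ nor $3$ is quasinilpotent), let alone such a sum with an uncontrolled corner of $b$. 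Your own remark that $b^{h}$ need not commute with $aa^{h}$ is the symptom of the wrong choice; with $p=bb^{h}$ everything attached to $b$ commutes with $p$ automatically. Once you switch to $p=bb^{h}$, your steps (iv)--(v) go through essentially as in the paper, which concludes by noting $(a+b)^{h}=(a+b)^{d}$, $a^{h}=a^{d}$, $b^{h}=b^{d}$ and citing the established additive formula for generalized Drazin inverses rather than re-deriving the triangular-inverse identity.
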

\begin{proof}  Case 1. $b\in A^{qnil}$. Then $b^{\pi}=1$, and so we obtain the result by applying Lemma 5.2.

Case 2. $b\not\in A^{qnil}$. We have
$$b=\left(\begin{array}{cc}
b_1&0\\
0&b_2
\end{array}
\right)_p, a=\left(\begin{array}{cc}
a_{11}&a_{1}\\
a_{21}&a_2
\end{array}
\right)_p,$$ where $b_1\in U(pAp), b_2\in ((1-p)A(1-p))^{qnil}\subseteq A^{qnil}$. It follows from $a=ab^{\pi}$ that $a_{11}=a_{21}=0$, and so
$$a+b=\left(\begin{array}{cc}
b_1&a_1\\
0&a_2+b_2
\end{array}
\right)_p.$$ Since $b^{\pi}ba^{\pi}=b^{\pi}b, b^{\pi}a^{\pi}ba=b^{\pi}a^{\pi}ab$, we see that $a_2^{\pi}a_2b_2=a_2^{\pi}b_2a_2$. Now, it follows that
$a_2+b_2\in (1-p)A(1-p)$ has generalized Hirano inverse. As $b_1=pbp$ has generalized Hirano inverse, it follows by Lemma 5.1 that $a+b$ has generalized Hirano inverse. According to Corollary 2.2, we see that $(a+b)^{h}=(a+b)^{d}, a^{h}=a^{d}$ and $b^{h}=b^{d}$. Therefore we complete the proof, by ~\cite[Theorem 2.2]{C}.\end{proof}

\begin{cor} Let $A$ be a Banach algebra and $a,b\in A$. If $a,b\in A$ have generalized Hirano inverses and satisfy
$$ab=ba, a=ab^{\pi}, b^{\pi}=ba^{\pi}=b^{\pi}b,$$ then $a+b$ has generalized Hirano inverse and
$(a+b)^{h}=b^{h}.$ \end{cor}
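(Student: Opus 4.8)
The plan is to derive this corollary as a special case of Theorem 5.3, so the first task is to verify that the three hypotheses of that theorem follow from the three hypotheses $ab=ba$, $a=ab^{\pi}$, $b^{\pi}=ba^{\pi}=b^{\pi}b$ stated here. Reading the displayed condition of the corollary as ``$ab=ba$, $a=ab^{\pi}$, and $ba^{\pi}=b^{\pi}b=b^{\pi}$'' (so in particular $b^{\pi}b=b^{\pi}$, forcing $b^{\pi}b^2=b^{\pi}b$, etc.), I would first note that $a=ab^{\pi}$ is literally the first hypothesis of Theorem 5.3. For the second hypothesis $b^{\pi}ba^{\pi}=b^{\pi}b$: using $ab=ba$ one checks that $a^{\pi}=1-aa^h$ commutes with $b$ and with $b^h$ (since $a^h\in comm^2(a)$ and $b\in comm(a)$), hence $b^{\pi}ba^{\pi}=b^{\pi}a^{\pi}b=a^{\pi}(b^{\pi}b)$; then invoking $b^{\pi}b=b^{\pi}$ and the fact that $a^{\pi}$ commutes with $b^{\pi}$, this equals $a^{\pi}b^{\pi}=b^{\pi}a^{\pi}$, and one must reconcile this with the desired right-hand side $b^{\pi}b=b^{\pi}$; the identity $ba^{\pi}=b^{\pi}$ from the hypotheses is exactly what closes this gap, giving $b^{\pi}ba^{\pi}=b^{\pi}(ba^{\pi})=b^{\pi}b^{\pi}=b^{\pi}=b^{\pi}b$. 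For the third hypothesis $b^{\pi}a^{\pi}ba=b^{\pi}a^{\pi}ab$: this is immediate from $ab=ba$, since then $ba=ab$ and multiplying on the left by $b^{\pi}a^{\pi}$ gives the claim with no commutation issues at all.

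Having checked the hypotheses, I would apply Theorem 5.3 to conclude that $a+b$ has generalized Hirano inverse, and then specialize its formula. The four summands in the Theorem 5.3 formula should collapse dramatically: the key observation is that $a=ab^{\pi}$ together with $b^{\pi}b=b^{\pi}$ (hence $bb^{\pi}=b^{\pi}$ by commutativity $bb^h=b^hb$, giving $b^hb\cdot b=b$ awkwardly — instead I would argue $ab^h=ab^{\pi}b^h$, and since $b^{\pi}b^h=(1-bb^h)b^h=b^h-bb^hb^h=b^h-b^h=0$, we get $ab^h=0$) so every term containing a factor $b^h a$ or $a b^h$ dies. Concretely $b^ha=0$ forces the double sum to vanish, the third summand $\sum (b^h)^{n+2}a(a+b)^n a^h b$ to vanish, and the last summand $\sum b^h a (a^h)^{n+2} b (a+b)^n$ to vanish. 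In the first summand $(b^h+\sum_{n\ge 0}(b^h)^{n+2}a(a+b)^n)a^{\pi}$, again $b^h a=0$ kills the sum, leaving $b^h a^{\pi}$; and since $a=ab^{\pi}$ gives $a a^h=a b^{\pi} a^h$, with $b^{\pi}$ commuting with $a^h$ (from $ab=ba$), one pushes through to show $b^h a^{\pi}=b^h$, e.g. $b^h a a^h = b^h a b^{\pi} a^h$ and $b^h a=0$ again gives $b^h a a^h=0$, hence $b^h a^{\pi}=b^h(1-aa^h)=b^h$. Thus $(a+b)^h=b^h$.

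The step I expect to be the main obstacle is the bookkeeping in the second paragraph: correctly establishing the small identities $b^h a=0$ (equivalently $ab^h=0$) and $b^h a^{\pi}=b^h$ from the hypotheses, since these hinge on delicately combining $a=ab^{\pi}$, $b^{\pi}b=b^{\pi}$, $ab=ba$, and the double-commutant memberships $a^h\in comm^2(a)$, $b^h\in comm^2(b)$ to get the needed commutations between $a$, $b$, $a^h$, $b^h$, $a^{\pi}$, $b^{\pi}$. Once $ab^h=b^ha=0$ is in hand, the collapse of the series is routine term-by-term inspection. A secondary point to be careful about is the precise parsing of the corollary's hypothesis line ``$b^{\pi}=ba^{\pi}=b^{\pi}b$'': I would state explicitly at the start of the proof that this means $ba^{\pi}=b^{\pi}b=b^{\pi}$, so that both $ba^{\pi}=b^{\pi}$ and $b^{\pi}b=b^{\pi}$ are available, as both are used above.
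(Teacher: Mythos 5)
Your proposal is correct and takes essentially the same route as the paper: the paper's own proof simply notes that $a=ab^{\pi}$ forces $ab^{h}=0$ (hence $(b^{h})^{n+2}a(a+b)^{n}=a(b^{h})^{n+2}(a+b)^{n}=0$) and then invokes Theorem 5.3. Your version merely fills in the details the paper leaves implicit, namely the verification of the hypotheses of Theorem 5.3 and the final reduction $b^{h}a^{\pi}=b^{h}$.
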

\begin{proof} Since $a=ab^{\pi}$, we see that $ab^h=0$. Thus, $(b^{h})^{n+2}a(a+b)^n=a(b^{h})^{n+2}(a+b)^n=0.$
Therefore the result follows by Theorem 5.3.\end{proof}

\begin{cor} Let $A$ be a Banach algebra and $a,b\in A$. If $a,b$ have generalized Hirano inverses and $ab=ba=0$, then $a+b$ has generalized Hirano inverse and $(a+b)^{h}=a^{h}+b^{h}.$\end{cor}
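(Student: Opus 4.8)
The plan is to show that $x:=a^{h}+b^{h}$ is precisely the generalized Hirano inverse of $a+b$, i.e.\ that it meets the three requirements $x(a+b)x=x$, $x\in comm^{2}(a+b)$ and $(a+b)^{2}-(a+b)x\in A^{qnil}$. The engine of the calculation is a list of orthogonality relations forced by $ab=ba=0$: since $b\in comm(a)$ and $a\in comm(b)$, while $a^{h}\in comm^{2}(a)$, $b^{h}\in comm^{2}(b)$ with $a^{h}=(a^{h})^{2}a=a(a^{h})^{2}$ and $b^{h}=(b^{h})^{2}b=b(b^{h})^{2}$, one obtains at once $a^{h}b=ba^{h}=ab^{h}=b^{h}a=a^{h}b^{h}=b^{h}a^{h}=0$. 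From these, $(a+b)x=aa^{h}+bb^{h}$, hence $x(a+b)x=a^{h}(aa^{h})+b^{h}(bb^{h})=a^{h}+b^{h}=x$; and $(a+b)^{2}=a^{2}+b^{2}$, so $(a+b)^{2}-(a+b)x=(a^{2}-aa^{h})+(b^{2}-bb^{h})$. Each summand lies in $A^{qnil}$ by the very definition of the generalized Hirano inverses, and the orthogonality relations annihilate every cross product of the two, so they commute and Lemma~4.4 yields $(a+b)^{2}-(a+b)x\in A^{qnil}$.

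The substantive step is $x\in comm^{2}(a+b)$. First, the hypotheses $a=ab^{\pi}$, $b^{\pi}ba^{\pi}=b^{\pi}b$, $b^{\pi}a^{\pi}ba=b^{\pi}a^{\pi}ab$ of Theorem~5.3 are immediate from $ab=ba=0$, so $a+b$ has a generalized Hirano inverse, which by Corollary~2.3 is the generalized Drazin inverse $(a+b)^{d}$. Now set $p:=aa^{h}+bb^{h}$. Since $aa^{h}$ and $bb^{h}$ are orthogonal idempotents, $p$ is an idempotent; the orthogonality relations give $p(a+b)=(a+b)p$; and relative to $p$ one has $a+b=c_{1}\oplus c_{0}$ with $c_{1}:=p(a+b)p=a^{2}a^{h}+b^{2}b^{h}$ and $c_{0}:=(1-p)(a+b)(1-p)=(a-a^{2}a^{h})+(b-b^{2}b^{h})$. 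A short computation shows that $c_{1}$ is invertible in $pAp$ with two-sided inverse exactly $x=a^{h}+b^{h}$, whereas $c_{0}\in A^{qnil}$ (again Lemma~4.4, since $(a-a^{2}a^{h})(b-b^{2}b^{h})=0$ and likewise with the factors reversed). Thus $p$ is an idempotent commuting with $a+b$ that splits $a+b$ into an invertible summand on the corner $pAp$ and a quasinilpotent summand on the complementary corner; by the uniqueness of such a splitting idempotent --- the standard description of the generalized Drazin inverse, see \cite{KP} and \cite{H} --- we get $p=(a+b)(a+b)^{d}$, and consequently $(a+b)^{d}$ is nothing but the inverse of $c_{1}=p(a+b)p$ inside $pAp$. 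Since inverses in a corner are unique, $(a+b)^{d}=x=a^{h}+b^{h}$; in particular $x=(a+b)^{d}\in comm^{2}(a+b)$. All three requirements now hold, so $(a+b)^{h}=a^{h}+b^{h}$.

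I expect the membership $x\in comm^{2}(a+b)$ to be the one genuine obstacle. One cannot argue componentwise: an element commuting with $a+b$ need not commute with $a$ or with $b$ separately (already when $a+b$ is invertible), so in contrast to the situations handled by Theorem~5.3 --- where $b^{h}a=0$ lets one read the inverse off the explicit additive formula --- the value $a^{h}+b^{h}$ cannot be reached by a mechanical substitution. What makes it work is recognizing $aa^{h}+bb^{h}$ as the spectral idempotent of $a+b$ and $a^{h}+b^{h}$ as the corner-inverse of $p(a+b)p$. (Alternatively, once Theorem~5.3 guarantees that $a+b$ has a generalized Hirano inverse, one may simply quote the classical additive identity $(a+b)^{d}=a^{d}+b^{d}$, valid whenever $ab=ba=0$, e.g.\ as a consequence of the results of \cite{C}, together with Corollary~2.3.)
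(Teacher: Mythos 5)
Your proposal is correct, but it takes a genuinely different route from the paper. The paper's entire proof is the one line ``This is obvious by Theorem 5.3'': one checks that $ab=ba=0$ forces $a=ab^{\pi}$, $b^{\pi}ba^{\pi}=b^{\pi}b$ and $b^{\pi}a^{\pi}ba=b^{\pi}a^{\pi}ab$, and then reads the value $a^{h}+b^{h}$ off the explicit series in Theorem 5.3. You instead verify the three defining conditions for $a^{h}+b^{h}$ directly from the orthogonality relations $a^{h}b=ba^{h}=ab^{h}=b^{h}a=a^{h}b^{h}=b^{h}a^{h}=0$, invoking Theorem 5.3 only for existence and then identifying $aa^{h}+bb^{h}$ as the spectral idempotent of $a+b$ to get $a^{h}+b^{h}=(a+b)^{d}\in comm^2(a+b)$ via the Koliha-type uniqueness of a commuting ``invertible corner plus quasinilpotent corner'' splitting (essentially the content of the paper's Corollary 2.8, so this step does use that $A$ is a Banach algebra). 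Your approach costs more computation but buys reliability: if one actually substitutes $ab=ba=0$ into the displayed formula of Theorem 5.3, every summand contains a factor $b^{h}a=0$ or $(b^{h})^{n+2}a=0$, and the series collapses to $b^{h}a^{\pi}=b^{h}$ rather than $a^{h}+b^{h}$; so the corollary does not in fact drop out of the printed formula, and a direct verification (or an appeal to the classical identity $(a+b)^{d}=a^{d}+b^{d}$ for $ab=ba=0$ together with Corollary 2.3, as you note in your closing parenthesis) is needed to justify the stated value. All of your individual computations ($x(a+b)x=x$, $(a+b)^{2}-(a+b)x=(a^{2}-aa^{h})+(b^{2}-bb^{h})$ quasinilpotent via Lemma 4.4, invertibility of $p(a+b)p$ in $pAp$ with inverse $x$) check out.
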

\begin{proof} This is obvious by Theorem 5.3.\end{proof}

\vskip10mm

\end{document}